\DeclareMathAlphabet{\mathpzc}{OT1}{pzc}{m}{it}
\newcolumntype{L}{>{$}l<{$}}
\crefname{hypothesis}{Hypothesis}{Hypotheses}
\title{Uniform Asymptotic Expansions for Bessel Functions of Imaginary Order and their zeros}
\author{T. M. Dunster\thanks{Department of Mathematics and Statistics, San Diego State University, 5500 Campanile Drive, San Diego, CA 92182-7720, USA. 
  (\email{mdunster@sdsu.edu}, \url{https://tmdunster.sdsu.edu}).}
  }
\newcommand*{\addFileDependency}[1]{% argument=file name and extension
  \typeout{(#1)}% latexmk will find this if $recorder=0 (however, in that case, it will ignore #1 if it is a .aux or .pdf file etc and it exists! if it doesn't exist, it will appear in the list of dependents regardless)
  \@addtofilelist{#1}% if you want it to appear in \listfiles, not really necessary and latexmk doesn't use this
  \IfFileExists{#1}{}{\typeout{No file #1.}}% latexmk will find this message if #1 doesn't exist (yet)
}
\begin{document}

\maketitle

\begin{abstract}
Bessel and modified Bessel functions of imaginary order $i\nu$ ($\nu >0$) are studied. Asymptotic expansions are derived as $\nu \to \infty$ that are uniformly valid in unbounded complex domains, with error bounds provided. Coupled with appropriate connection formulas the approximations are uniformly valid for all complex argument. The expansions are of two forms, Liouville-Green type expansions only involving elementary functions, and ones involving Airy functions that are valid at a turning point of the defining differential equation. The new results have coefficients and error bounds that are simpler than in prior expansions, and further are used to construct asymptotic expansions for the zeros of Bessel and modified Bessel functions of large imaginary order, these being uniformly valid without restriction on their size (small or large). 
\end{abstract}

\begin{keywords}
{Bessel functions, asymptotic expansions, turning point theory, zeros}
\end{keywords}

\begin{AMS}
  33C10, 34E05, 34E20
\end{AMS}

\section{Introduction}
\label{sec:Introduction}

Bessel functions of imaginary order play a crucial role in quantum mechanics, particularly in solving the one-dimensional Schrödinger equation with exponential potentials \cite{Krynytskyi:2021:AEE}. They are also significant in the study of various hydrodynamical models \cite{Palm:1957:OZB}, \cite{Taylor:1931:EVD}. A method for analysing wave-field dispersion relations involving these Bessel functions was detailed in \cite{Chapman:2012:ASD}, with applications in astronomy and oceanography.

The modified Bessel function $K_{i\nu}(x)$ is the unique standard Bessel function that is real for positive argument $x$, and more importantly is recessive at $x = \infty$. In contrast, the Bessel functions $J_{i\nu}(x)$, $Y_{i\nu}(x)$, and the modified Bessel function $I_{i\nu}(x)$ are complex when $\nu$ and $x$ are real and nonzero and are dominant at infinity. $K_{i\nu}(x)$ serves as the kernel of the Kontorovich-Lebedev transform \cite{Palm:1957:OZB}, \cite{Wong:1981:AEK}, \cite{Yakubovich:1996:ITS}. For additional references to other physical applications, see \cite{Dunster:1990:BFP} and \cite{Shi:2009:HEM}.

An algorithm for evaluating $K_{i\nu}(x)$, utilising series, a continued fraction method, and non-oscillating integral representations, was presented in \cite{Gil:2002:EMB}. Computation using Airy uniform asymptotics for $K_{i\nu}(x)$ was studied in \cite{Gil:2003:CMB}, with error bounds for Airy expansions provided by integral methods in \cite{Shi:2010:EBU}. Additionally, in \cite{Shi:2009:HEM} hyperasymptotic expansions for $K_{i\nu}(x)$ for positive $\nu$ and $x$ were obtained employing Hadamard series. 

In \cite{Balogh:1967:AMB} and \cite[Chap. 11, Ex. 10.6]{Olver:1997:ASF} a uniform asymptotic expansion of $K_{i \nu}(z)$ is given which involves Airy functions, and in \cite{Dunster:1990:BFP} similar expansions were derived for other Bessel functions. Asymptotic expansions for the zeros of various Bessel and modified Bessel functions of large imaginary order are given in \cite{Dunster:1990:BFP}, and the $\nu$-zeros of Bessel functions of imaginary order for a fixed positive argument have also been examined, most recently in \cite{Paris:2022:NZK} and \cite{Paris:2022:NZB}.

The modified Bessel functions $z^{1/2}\mathcal{L}_{\pm i\nu}(\nu z)$ ($\mathcal{L}=I,K$) satisfy the normalised modified Bessel differential equation
%%%%%%%%%%%%%%%%%%%%%%%%%
\begin{equation}
\label{eq00}
\frac{d^{2} w}{dz^{2}}=
\left\{\nu^{2}\frac{z^{2}-1}{z^{2}} 
-\frac{1}{4z^{2}}\right\} w,
\end{equation}
%%%%%%%%%%%%%%%%%%%%%%%%%
which forms the basis of this paper (cf. \cite[Chap. 11, Eq. (10.01)]{Olver:1997:ASF}). This equation has a regular singularity at $z=0$ with imaginary exponent $i \nu$, an irregular singularity at $z=\infty$, and for large $\nu$ it has turning points at $z= \pm 1$. Here we obtain asymptotic expansions for Bessel and modified Bessel functions for large positive $\nu$ that are uniformly valid in well-defined unbounded complex domains which include points arbitrarily close to $z=0$. We primarily consider the half-plane $|\arg(z)|\leq \frac12 \pi$, with extension to other regions obtained by appropriate connection formulas (see, for example, \cite[Sec. 10.34]{NIST:DLMF}). We shall also obtain simple, uniform asymptotic expansions for their zeros which are more general than currently exist. We illustrate the high accuracy of these approximations with some numerical examples.

The plan of the paper is as follows. In \cref{sec:LGExpansions}, Liouville-Green (LG) expansions that involve the exponential function with an asymptotic expansion in inverse powers of $\nu$ contained in its argument are constructed using results given in \cite{Dunster:2020:LGE}. The advantage of this less usual form is two-fold: firstly, the coefficients in the expansion are generally simpler to evaluate, as are the error bounds. Secondly, the form makes it easier to obtain asymptotic expansions for the zeros of the functions being approximated. In fact, in \cref{sec:LGzeros}, we use our LG solutions to obtain asymptotic expansions for the zeros of any linear combination of $\Re\{J_{i\nu}(\nu x)\}$ and $\Im\{J_{i\nu}(\nu x)\}$, and these are uniformly valid for $0<x<\infty$; in other words, all the real zeros of these functions. We remark that the sequence of these zeros approaches both $x=0$ and $x=\infty$.

As is typical, our LG expansions are not valid at the turning point $z=1$. In \cref{sec:AiryExpansions} expansions that are valid at this point are constructed using the method of \cite{Dunster:2017:COA}. These expansions are similar to the standard form given in \cite[Chap. 11]{Olver:1997:ASF} which employs Airy functions and their derivatives, except that our solutions involve two explicitly given slowly varying functions. They are both expanded as asymptotic expansions using the LG expansions of \cref{sec:LGExpansions}. As a result, the coefficients involved and the associated error bounds are considerably simpler to evaluate than in the standard form. Our expansions are valid in an unbounded complex domain that includes all points in the right half-plane $|\arg(z)| \leq \frac12 \pi$ except $z=1$. Extension to this turning point is then achieved in two ways: either by using Cauchy's integral formula, or by reexpanding the expansions in a standard form where the coefficients of the series are analytic at $z=1$.

The positive zeros of $K_{i \nu}(\nu z)$ lie in $(0,1)$ and are infinite in number, with the singularity at $z=0$ (which recall has an imaginary exponent) being the limit point of this sequence. In \cref{sec:Kzeros} we use the method of \cite{Dunster:2024:AZB} to construct asymptotic expansions as $\nu \to \infty$ for these zeros (as well as a companion modified Bessel function). These are more powerful than existing results, since our new approximations are uniformly valid for all zeros, in particular including those arbitrarily close to $x=0$.

\section{Liouville-Green Expansions}
\label{sec:LGExpansions}

LG and Airy function expansions involve new variables typically denoted by $\xi$ and $\zeta$, respectively (see \cite[Chap. 10, Sec. 2.1 and Chap. 11, Sec. 3.1]{Olver:1997:ASF}). LG expansions involve exponential functions and are not valid at turning points, whereas Airy function approximations are used in regions containing a turning point. In this section we construct the former for the modified Bessel functions $I_{\pm i \nu}(\nu z)$ and $K_{i \nu}(\nu z)$.

On identifying (\ref{eq00}) with \cite[Chap. 10, Eq. (1.01) and Chap. 11, Eq. (3.01]{Olver:1997:ASF} we observe that $u=\nu$ and $f(z)=(z^2-1)/z^2$, and hence $\xi$ and $\zeta$ are given by
%%%%%%%%%%%%%%%%
\begin{equation}
\label{eq01}
\xi=\frac{2}{3}\zeta^{3/2}
=\int_{1}^{z}\frac{\left(t^2-1\right)^{1/2}}{t}dt
=\left(z^{2}-1\right)^{1/2}
-\mathrm{arcsec}(z).
\end{equation}
%%%%%%%%%%%%%%%%%%
Principal branches are taken in (\ref{eq01}) so that the interval $1 \leq z < \infty$ is mapped to $0 \leq \zeta < \infty$ and $0 \leq \xi < \infty$, with $\xi$ being a continuous function in the $z$ plane having a cut along $(-\infty,1]$, and $\zeta$ is an analytic function of $z$ in a region that contains the principal right half-plane excluding $z=0$ (at which $\zeta$ is unbounded). 

For $z=x \in (0,1]$ we have $\zeta \in (-\infty,0]$ such that
%%%%%%%%%%%%%%%%
\begin{equation}
\label{eq03}
\frac{2}{3}(-\zeta)^{3/2}
=\int_{x}^{1}\frac{\left(1-t^2\right)^{1/2}}{t}dt
=\ln\left\{
\frac{1+(1-x^{2})^{1/2}}{x}
\right\}-\left(1-x^{2}\right)^{1/2},
\end{equation}
%%%%%%%%%%%%%%%%%%
and we note that
%%%%%%%%%%%%%%%%
\begin{equation}
\label{eq01a}
\zeta = 2^{1/3}(z-1) - \tfrac{3}{10}2^{1/3}(z-1)^2
+\mathcal{O}\left\{(z-1)^{3}\right\}
\quad (z \to 1),
\end{equation}
%%%%%%%%%%%%%%%%%%
as well as
%%%%%%%%%%%%%%%%
\begin{equation}
\label{eq02}
\xi=z-\tfrac{1}{2}\pi +\mathcal{O}\left(z^{-1}\right)
\quad  (z \to \infty).
\end{equation}
%%%%%%%%%%%%%%%%%%

For $z=x \pm i0$ with $0 < x \leq 1 $ it is readily verified from (\ref{eq01}) that
%%%%%%%%%%%%%%%%
\begin{equation}
\label{eq04}
\xi
=\mp i \ln\left\{
\frac{1+(1-x^{2})^{1/2}}{x}
\right\} \pm i \left(1-x^{2}\right)^{1/2},
\end{equation}
%%%%%%%%%%%%%%%%%%
and as $x \to 0 \pm i0$ we find that $\xi \to \mp i \infty$ such that
%%%%%%%%%%%%%%%%
\begin{equation}
\label{eq05}
\xi
=\pm i\ln\left(\tfrac{1}{2}x\right) \pm i
+ \mathcal{O}\left(x^{2}\right).
\end{equation}
%%%%%%%%%%%%%%%%%%
In \cref{fig:Figzplane,fig:Figxiplane} a map of the first quadrant $0 \leq \arg(z) \leq \frac12 \pi$ to the $\xi$ plane is shown, with corresponding points in the respective planes labeled $\mathsf{A}$ - $\mathsf{G}$.

\begin{figure}
 \centering
 \includegraphics[
 width=0.6\textwidth,keepaspectratio]
 {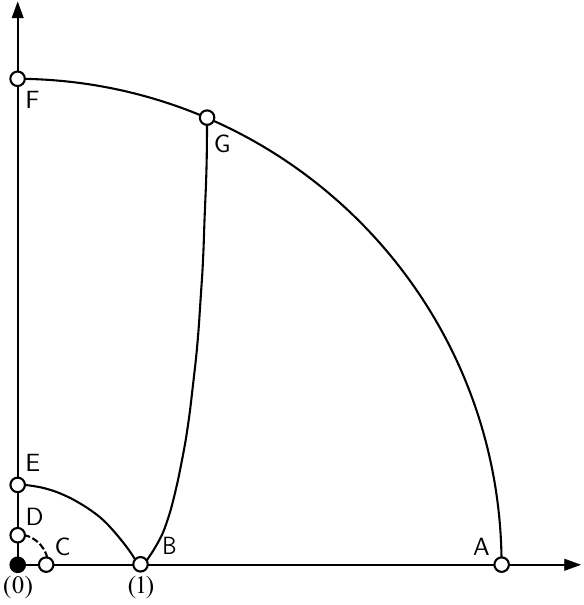}
 \caption{$z$ plane.}
 \label{fig:Figzplane}
\end{figure}

\begin{figure}
 \centering
 \includegraphics[
 width=0.7\textwidth,keepaspectratio]
 {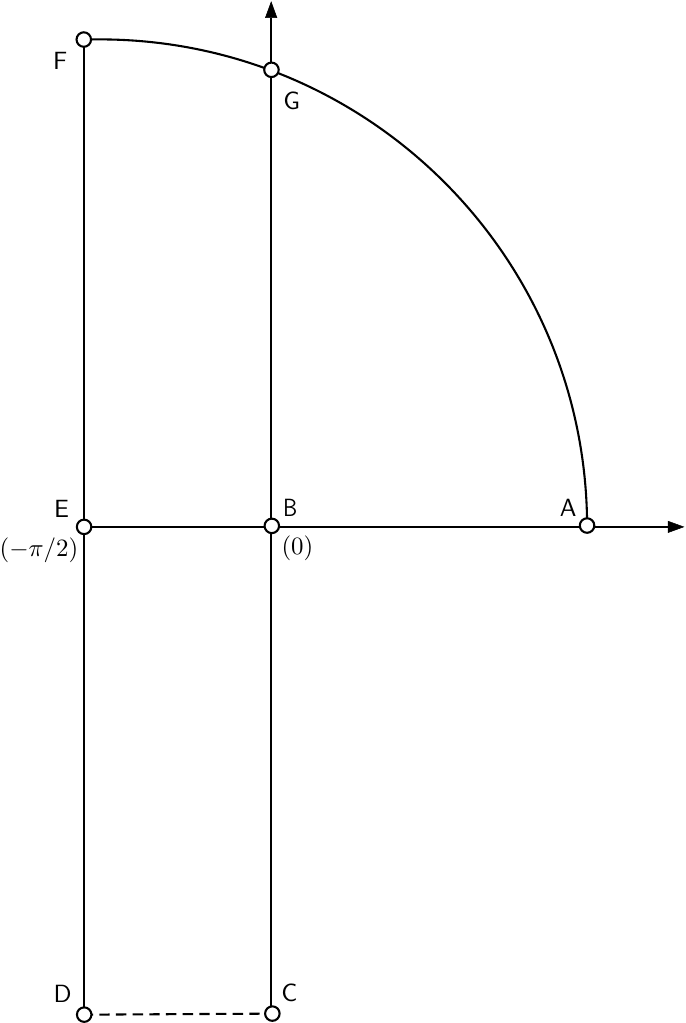}
 \caption{$\xi$ plane.}
 \label{fig:Figxiplane}
\end{figure}

We now apply \cite[Thm. 1.1]{Dunster:2020:LGE} to obtain asymptotic solutions of (\ref{eq00}). In order to construct the coefficients in these expansions it is convenient to work with the variable
%%%%%%%%%%%%%%%%%%
\begin{equation}
\label{eq06}
\beta=(z^{2}-1)^{-1/2},
\end{equation}
%%%%%%%%%%%%%%%%%%
Here the branch of the square root is positive for $z \in (1,\infty)$ and is continuous in the plane having a cut along $[-1,1]$. Note from (\ref{eq01}) and (\ref{eq06})
%%%%%%%%%%%%%%%%
\begin{equation}
\label{eq07}
\xi=1/\beta-\mathrm{arccot}(\beta),
\end{equation}
%%%%%%%%%%%%%%%%%%
and
%%%%%%%%%%%%%%%%
\begin{equation}
\label{eq08}
\frac{d\xi}{d \beta}
=-\frac{1}{\beta^{2}(\beta^{2}+1)}.
\end{equation}
%%%%%%%%%%%%%%%%%%

Similarly to \cite[Eqs. (5.6) - (5.8)]{Dunster:2021:SEB} we then define a sequence of polynomials via
%%%%%%%%%%%%%%%%%%
\begin{equation}
\label{eq09}
\mathrm{E}_{1}(\beta)=\tfrac{1}{24}\beta
\left(5\beta^{2}+3\right),
\end{equation}
%%%%%%%%%%%%%%%%%%
%%%%%%%%%%%%%%%%%%
\begin{equation}
\label{eq10}
\mathrm{E}_{2}(\beta)=
\tfrac{1}{16}\beta^{2}
\left(5\beta^{2}+1\right)
\left(\beta^{2}+1\right),
\end{equation}
%%%%%%%%%%%%%%%%%%
and for $s=2,3,4\ldots$
%%%%%%%%%%%%%%%%%%
\begin{equation}
\label{eq11}
\mathrm{E}_{s+1}(\beta) =
\frac{1}{2} \beta^{2} \left(\beta^{2}+1 \right)\mathrm{E}_{s}^{\prime}(\beta)
+\frac{1}{2}\int_{0}^{\beta}
p^{2}\left(p^{2}+1 \right)
\sum\limits_{j=1}^{s-1}
\mathrm{E}_{j}^{\prime}(p)
\mathrm{E}_{s-j}^{\prime}(p) dp.
\end{equation}
%%%%%%%%%%%%%%%%%%
Note that by induction one can readily establish that $\mathrm{E}_{2s}(\beta)$ and $\mathrm{E}_{2s+1}(\beta)$ are even and odd, respectively, and all have a factor $\beta^{s}$ (and hence vanish at $\beta=0$). The LG solutions are then given by
%%%%%%%%%%%%%%%%%%
\begin{equation}
\label{eq12}
W_{j}(\nu,\zeta) =
\exp \left\{ -\nu\xi +\sum\limits_{s=1}^{n-1}{(-1)^{s}
\frac{\mathrm{E}_{s}(\beta)}{\nu ^{s}}}\right\} 
\left\{ 1+\eta_{n,j}(\nu,z) \right\} 
\quad (j=0,-1),
\end{equation}
%%%%%%%%%%%%%%%%%%
and 
%%%%%%%%%%%%%%%%%%
\begin{equation}
\label{eq13}
W_{1}(\nu,\zeta) =
\exp \left\{ \nu \xi +\sum\limits_{s=1}^{n-1}
{\frac{\mathrm{E}_{s}(\beta)}{\nu^{s}}}\right\} 
\left\{ 1+\eta_{n,1}(\nu,z) \right\},
\end{equation}
%%%%%%%%%%%%%%%%%%
for $n=2,3,4,\ldots$.

Bounds for the error terms $\eta_{n,j}(\nu,z)$ ($j=0,\pm1$) are given as follows. First let
%%%%%%%%%%%%%%%%%%
\begin{equation}
\label{eq14}
\mathrm{F}_{s}(z)
=\frac{d \mathrm{E}_{s}(\beta)}{d \xi}
=-\frac{z^2 \mathrm{E}'_{s}(\beta)}{\left(z^2-1\right)^{2}},
\end{equation}
%%%%%%%%%%%%%%%%%%
where the prime is differentiation with respect to $\beta$. Let $z=\alpha_{0}=\infty$ ($\arg(z)=0$), and $z=\alpha_{\pm 1}$ correspond to $\xi =-\frac12 \pi -i\infty$ and $\xi = -i\infty$, respectively (see \cref{fig:Figxiplane}). Note that $z=\alpha_{\pm 1}$ corresponds to $z \to 0$ along the positive imaginary and positive real axes, respectively (see \cref{fig:Figzplane}).

Now, on noting that $d\xi/dz=z^{-1}(z^2-1)^{1/2}$, we have from \cite[Thm. 1.1]{Dunster:2020:LGE} for $j=0,\pm 1$ and $n=2,3,4,\ldots$
%%%%%%%%%%%%%%%%%%
\begin{equation}
\label{eq15}
\left\vert \eta_{n,j}(\nu,z)\right\vert  
\leq \frac{1}{\nu^{n}} \Phi_{n,j}(\nu,z) 
\exp \left\{\frac{1}{\nu }\Psi_{n,j}(\nu,z) 
+\frac{1}{\nu^{n}} \Phi_{n,j}(\nu,z)\right\},
\end{equation}
%%%%%%%%%%%%%%%%%%
where
%%%%%%%%%%%%%%%%%%
\begin{multline}
\label{eq16}
\Phi_{n,j}(\nu,z) 
=2\int_{\alpha _{j}}^{z}
{\left\vert {t^{-1}\left(t^2-1\right)^{1/2}  
\mathrm{F}_{n}(t) dt}\right\vert } 
\\
+\sum\limits_{s=1}^{n-1}{\frac{1}{\nu^{s}}
\sum\limits_{k=s}^{n-1}
\int_{\alpha _{j}}^{z}
\left\vert{t^{-1}\left(t^2-1\right)^{1/2}
\mathrm{F}_{k}(t) 
\mathrm{F}_{s+n-k-1}(t) dt}\right\vert},
\end{multline}
%%%%%%%%%%%%%%%%%%
and
%%%%%%%%%%%%%%%%
\begin{equation}
\label{eq17}
\Psi_{n,j}(\nu,z) 
=4\sum\limits_{s=0}^{n-2}\frac{1}{\nu^{s}}
\int_{\alpha _{j}}^{z}
{\left\vert {t^{-1}\left(t^2-1\right)^{1/2}
\mathrm{F}_{s+1}(t) dt}\right\vert }.
\end{equation}
%%%%%%%%%%%%%%%%%%

In all integrals the paths (i) consist of a finite chain of $R_{2}$ arcs (as defined in \cite[Chap. 5, \S 3.4]{Olver:1997:ASF}), and (ii) as $v$ passes along the path from $\alpha_{j}$ to $z$  $\Re\{\xi(v)\}$ is monotonic, where $\xi(v)$ is given by (\ref{eq01}) with $z=v$. Let the regions of validity in the $z$ plane be denoted by $Z_{j}$, with $\Xi_{j}$ denoting the corresponding ones in the $\xi$ plane. Then $Z_{j}$ consist of all points $z$ that can be linked to $\alpha_{j}$ by such a path and for which the integrals converge. They do converge at $z=\infty$, but not at the turning points $z=\pm 1$. Then it is seen that $Z_{0}$ and $Z_{1}$ include all points in the first quadrant $0 \leq \arg(z) \leq \frac12 \pi$ except the interval $[0,1]$ for $Z_{0}$ and $z=1$ for $Z_{1}$.

These regions can extend beyond the first quadrant; for example, for $j=-1$ we see from \cref{fig:Figxiplane} that $\Xi_{-1}$ consists of all points to the left of the imaginary axis $\mathsf{C}\mathsf{B}\mathsf{G}$, and also beyond the line $\Im(\xi)=-\frac12 \pi$ subject to the monotonicity condition on $\Re(\xi)$. In particular, in the corresponding $z$ plane all points in the second quadrant $\frac12 \pi \leq \arg(z) \leq \pi$ are included, with the exception of the turning point at $z=-1$. Thus $Z_{-1}$ includes points in the half plane $0 \leq \arg(z) \leq \pi$ that lie to the left of the curve $\mathsf{B}\mathsf{G}$ shown in \cref{fig:Figzplane}, excluding points on this curve and $z=-1$.

It is worth emphasising that $Z_{-1}$ does not include $\Re(z)=\infty$ ($\Re(\xi)=\infty$) and as such $W_{-1}(\nu,\zeta)$, unlike $W_{0}(\nu,\zeta)$, is not recessive as $\Re(z) \to \infty$. However, it is uniquely characterised by its oscillatory behaviour as $z \to 0$, with the same being true for $W_{1}(\nu,\zeta)$.

In order to match the LG solutions with modified Bessel functions we note from \cite[Eqs. 10.25.3, 10.30.1, 10.30.4 and 10.30.5]{NIST:DLMF}
%%%%%%%%%%%%%%%%%
\begin{equation} 
\label{eq18}
K_{i\nu}(\nu z)\sim
\left(\frac{\pi}{2\nu z}\right)^{1/2}e^{-\nu z}
\quad   \left(z \rightarrow \infty, \,
|\arg(z)| \leq \tfrac{3}{2} \pi - \delta\right),
\end{equation}
%%%%%%%%%%%%%
%%%%%%%%%%%%%%%%%
\begin{equation} 
\label{eq19}
I_{i\nu}(\nu z)\sim
\left(\frac{1}{2 \pi \nu z}\right)^{1/2}e^{\nu z}
\quad   \left(z \rightarrow \infty, \,
|\arg(z)| \leq \tfrac{1}{2} \pi - \delta\right),
\end{equation}
%%%%%%%%%%%%%
%%%%%%%%%%%%%%%%%
\begin{equation} 
\label{eq20}
I_{-i\nu}(\nu z)\sim
i \left(\frac{1}{2 \pi \nu z}\right)^{1/2}e^{-\nu (z-\pi)}
\quad   \left(z \rightarrow \infty, \,
 \tfrac{1}{2} \pi + \delta \leq \arg(z) 
 \leq \tfrac{3}{2} \pi - \delta\right),
\end{equation}
%%%%%%%%%%%%%
and in addition 
%%%%%%%%%%%%%%%%%
\begin{equation} 
\label{eq21}
I_{\pm i\nu}(\nu z)\sim
\frac{\left(\tfrac{1}{2} \nu z\right)^{\pm i \nu}}
{\Gamma(1 \pm i\nu)}
\quad   \left(z \rightarrow 0\right).
\end{equation}
%%%%%%%%%%%%%

Our LG expansions then read as follows.
\begin{theorem}
Let $\xi$, $\beta$ and coefficients $\mathrm{E}_{s}(\beta)$ ($s=1,2,3,\ldots$) be defined by (\ref{eq01}), (\ref{eq06}), and (\ref{eq09}) - (\ref{eq11}). Then for $\nu >0$ and $n=2,3,4,\ldots$
%%%%%%%%%%%%%%%%%
\begin{multline} 
\label{eq22}
K_{i\nu}(\nu z) =
\left(\frac{\pi}{2\nu}\right)^{1/2}
\frac{1}{\left(z^2-1\right)^{1/4}}
\exp \left\{ -\nu\xi-\frac{\pi \nu}{2} +\sum\limits_{s=1}^{n-1}{(-1)^{s}
\frac{\mathrm{E}_{s}(\beta)}{\nu ^{s}}}\right\} 
\\ \times
\left\{ 1+\eta_{n,0}(\nu,z) \right\},
\end{multline}
%%%%%%%%%%%%%
%%%%%%%%%%%%%%%%%
\begin{multline} 
\label{eq23}
I_{i\nu}(\nu z) =
\left(\frac{1}{2 \pi \nu}\right)^{1/2}
\frac{1}{\left(z^2-1\right)^{1/4}}
\exp \left\{\nu\xi +\frac{\pi \nu}{2}
+\sum\limits_{s=1}^{n-1}
{\frac{\mathrm{E}_{s}(\beta)}{\nu ^{s}}}\right\} 
\\ \times
\frac{ 1+\eta_{n,1}(\nu,z)}{1+\eta_{n,1}(\nu,\infty)},
\end{multline}
%%%%%%%%%%%%%
and
%%%%%%%%%%%%%%%%%
\begin{multline} 
\label{eq24}
I_{-i\nu}(\nu z) =
i \left(\frac{1}{2 \pi \nu}\right)^{1/2}
\frac{1}{\left(z^2-1\right)^{1/4}}
\exp \left\{ -\nu\xi + \frac{\pi \nu}{2} +\sum\limits_{s=1}^{n-1}{(-1)^{s}
\frac{\mathrm{E}_{s}(\beta)}{\nu ^{s}}}\right\} 
\\ \times
\frac{ 1+\eta_{n,-1}(\nu,z)}{1+\eta_{n,-1}(\nu,\infty e^{\pi i})},
\end{multline}
%%%%%%%%%%%%%
where $\eta_{n,j}(\nu,z)$ ($j=0,\pm 1$) are bounded by (\ref{eq15}) uniformly for $z \in Z_{j}$ (as described in the paragraph after (\ref{eq17})), and consequently are $\mathcal{O}(\nu^{-n})$ as $\nu \to \infty$ in these regions. Moreover, extension to the conjugates of these domains lying in the lower half $z$ plane follows from employing the above expansions in the reflection relations
%%%%%%%%%%%%%%%%%
\begin{equation} 
\label{eq24a}
I_{\pm i\nu}(\nu \bar{z})=\overline{I_{\mp i\nu}(\nu z)},\;
K_{i\nu}(\nu \bar{z})=\overline{K_{i\nu}(\nu z)}.
\end{equation}
%%%%%%%%%%%%%
\end{theorem}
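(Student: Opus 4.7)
The strategy is to specialise \cite[Thm. 1.1]{Dunster:2020:LGE} to (\ref{eq00}) with $u=\nu$ and $f(z)=(z^2-1)/z^2$. That theorem yields three LG solutions of (\ref{eq00}) of the form $z^{1/2}(z^2-1)^{-1/4}W_{j}(\nu,\zeta)$, with $W_{j}$ given by (\ref{eq12})--(\ref{eq13}) and error terms bounded by (\ref{eq15}) throughout $Z_{j}$. Since $z^{1/2}K_{i\nu}(\nu z)$ and $z^{1/2}I_{\pm i\nu}(\nu z)$ likewise solve (\ref{eq00}), establishing (\ref{eq22})--(\ref{eq24}) reduces to identifying, for each Bessel function, the constant multiple of the appropriate LG solution that matches its known asymptotic behaviour at $z=\infty$.

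For (\ref{eq22}) the natural choice is the solution built from $W_{0}$. Because $\alpha_{0}=\infty$ is the initial point of the integrals defining $\Phi_{n,0}$ and $\Psi_{n,0}$, one has $\eta_{n,0}(\nu,\infty)=0$; together with $\mathrm{E}_{s}(0)=0$ and (\ref{eq02}) the leading behaviour of the LG solution as $z\to\infty$ reduces to $z^{-1/2}e^{-\nu z+\nu\pi/2}$. Matching against (\ref{eq18}) forces the proportionality constant to be $(\pi/2\nu)^{1/2}e^{-\nu\pi/2}$, and absorbing the $e^{-\nu\pi/2}$ into the argument of the exponential yields (\ref{eq22}).

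For (\ref{eq23}) I would instead select the $W_{1}$ solution. Here the reference point $\alpha_{1}$ lies at $\xi=-i\infty$, so $\eta_{n,1}(\nu,\infty)$ is nonzero though still of order $\mathcal{O}(\nu^{-n})$. Repeating the comparison against (\ref{eq19}) as $z\to\infty$ produces a constant $(2\pi\nu)^{-1/2}e^{\nu\pi/2}/\{1+\eta_{n,1}(\nu,\infty)\}$, which gives the quotient appearing on the right of (\ref{eq23}). The argument for (\ref{eq24}) is parallel, using $W_{-1}$ and matching with (\ref{eq20}) as $z\to\infty e^{\pi i}$, a limit that lies in $Z_{-1}$ by \cref{fig:Figxiplane}, so $\eta_{n,-1}(\nu,\infty e^{\pi i})$ is a well-defined $\mathcal{O}(\nu^{-n})$ quantity. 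Finally, (\ref{eq24a}) is the standard Schwarz reflection identity for Bessel functions of real order parameter, and coupled with (\ref{eq22})--(\ref{eq24}) it extends the approximations to the conjugate domains.

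The principal obstacle will be the normalisation step in (\ref{eq23}) and (\ref{eq24}): one must verify that the integrals (\ref{eq16}) and (\ref{eq17}) converge along paths from $\alpha_{\pm1}$ out to the relevant points at infinity while respecting the $\Re(\xi)$-monotonicity condition stated after (\ref{eq17}). Convergence at infinity follows from the decay $\mathrm{F}_{s}(z)=\mathcal{O}(z^{-3})$ inherited from $\mathrm{E}_{s}(\beta)=\mathcal{O}(\beta^{s})$ at $\beta=0$, while convergence near $\alpha_{\pm1}$ rests on the boundedness of $t^{-1}(t^{2}-1)^{1/2}$ as $t\to 0$; monotonicity of $\Re(\xi)$ along the required rays is read off from \cref{fig:Figxiplane}. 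Once these are in place, division by $1+\eta_{n,\pm1}(\nu,\infty e^{0},\infty e^{\pi i})$ preserves the $\mathcal{O}(\nu^{-n})$ order of the error, completing the argument.
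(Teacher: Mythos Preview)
Your overall strategy coincides with the paper's, and the treatment of (\ref{eq22}) is fine. There is, however, a genuine gap in how you identify $I_{\pm i\nu}(\nu z)$ with the $W_{\pm 1}$-based LG solutions.

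For (\ref{eq23}) you simply ``select the $W_{1}$ solution'' and then match leading behaviours against (\ref{eq19}) as $z\to\infty$. But at $z=\infty$ both $I_{i\nu}(\nu z)$ and the $W_{1}$-based solution are \emph{dominant}; adding any multiple of the recessive $W_{0}$-based solution leaves the leading behaviour unchanged. Consequently, matching at $z=\infty$ determines the constant in front of the $W_{1}$ component but cannot rule out a hidden $W_{0}$ component. The same objection applies to your treatment of (\ref{eq24}) at $z=\infty e^{\pi i}$. The paper closes this gap by first establishing proportionality at $z=0$: from (\ref{eq05}) the $W_{1}$-based solution behaves like a constant times $z^{i\nu}$ as $z\to 0^{+}$, and by (\ref{eq21}) so does $I_{i\nu}(\nu z)$, whereas the independent solution $I_{-i\nu}(\nu z)$ (and hence any solution containing a $W_{0}$ component distinct from the $W_{1}$ one) picks up a $z^{-i\nu}$ piece. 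This oscillatory matching at the singularity is what pins down the proportionality; only afterwards is the constant read off at infinity using (\ref{eq19}) and (\ref{eq20}).

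A minor remark on (\ref{eq24a}): calling it ``the standard Schwarz reflection identity for Bessel functions of real order parameter'' is slightly off, since the order here is purely imaginary. The paper derives it from the Schwarz reflection principle applied to the real-valued combinations $I_{i\nu}(\nu z)+I_{-i\nu}(\nu z)$, $i\{I_{i\nu}(\nu z)-I_{-i\nu}(\nu z)\}$ and $K_{i\nu}(\nu z)$ on the positive real axis.
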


\begin{proof}
On matching the recessive solutions at $z=\infty$ in the usual manner, we obtain (\ref{eq22}) by using (\ref{eq18}). Likewise, matching solutions having unique oscillatory behaviour at $z=0$ (see (\ref{eq05}) and (\ref{eq21})) yields (\ref{eq23}) and (\ref{eq24}). In all three the constants of proportionality were determined by using (\ref{eq02}), (\ref{eq12}), (\ref{eq13}), (\ref{eq18}) - (\ref{eq20}), and the behavior of the solutions at $z=\infty$ ($\arg(z)=0$) for (\ref{eq22}) and (\ref{eq23}), and at $z=\infty e^{\pi i}$ (i.e. $z \to \infty$ along $\arg(z)=\pi$) for (\ref{eq24}). Finally, (\ref{eq24a}) follows from the Schwarz reflection principle and that $I_{i\nu}(\nu z)+I_{-i\nu}(\nu z)$, $i\{I_{i\nu}(\nu z)-I_{-i\nu}(\nu z)\}$ and $K_{i\nu}(\nu z)$ are real for positive $\nu$ and $z$.
\end{proof}

Note that in the derivation of (\ref{eq22}) we used (from the error bound (\ref{eq15})) that $\eta_{n,0}(\nu,\infty)=0$, since $\alpha_{0}=\infty$, but there is no reason to suppose that $\eta_{n,1}(\nu,\infty)$ and $\eta_{n,-1}(\nu,\infty e^{\pi i})$ also vanish.

\section{Asymptotics of \texorpdfstring{$J_{i \nu}(t)$}{} for \texorpdfstring{$0<t<\infty$}{}}
\label{sec:LGzeros}
From \cite[Eq. 10.27.6]{NIST:DLMF} for $0<x<\infty$
%%%%%%%%%%%%%%%%%%
\begin{equation}
\label{eq25}
J_{i\nu}(\nu x) 
=e^{\pi \nu/2}I_{i\nu}(i\nu x).
\end{equation}
%%%%%%%%%%%%%%%%%%
Now (\ref{eq24}) is valid for $z=ix$ with $0<x<\infty$ ($\arg(z)=\frac12 \pi$). For these purely imaginary values of $z$ we have from (\ref{eq04})
%%%%%%%%%%%%%%%%%%
\begin{equation}
\label{eq26}
\xi=-\tfrac12 \pi + i\rho,
\end{equation}
%%%%%%%%%%%%%%%%%%
where $\rho=\rho(x) \in (-\infty,\infty)$ is given by
%%%%%%%%%%%%%%%%
\begin{equation}
\label{eq27}
\rho=\left(x^{2}+1\right)^{1/2}
-\ln\left\{\frac{1+(x^{2}+1)^{1/2}}{x}\right\}.
\end{equation}
%%%%%%%%%%%%%%%%%%
Note $\rho \to \mp\infty$ as $x \to 0^{+}$ and $x \to \infty$ respectively; more precisely
%%%%%%%%%%%%%%%%%%
\begin{equation}
\label{eq37}
\rho=\ln\left(\tfrac12 x \right) +1
+\mathcal{O}(x^2)
\quad (x \to 0^{+}),
\end{equation}
%%%%%%%%%%%%%%%%%%
and
%%%%%%%%%%%%%%%%%%
\begin{equation}
\label{eq37a}
\rho=x - \frac{1}{2x}
+\mathcal{O}\left(\frac{1}{x^3}\right)
\quad (x \to \infty).
\end{equation}
%%%%%%%%%%%%%%%%%%
Also $\rho=0$ for $x=0.6627434193 \cdots$.

Moreover, we have from (\ref{eq06}) and (\ref{eq11})
%%%%%%%%%%%%%%%%%%
\begin{equation}
\label{eq28}
\beta = -i \hat{\beta},
\quad
\mathrm{E}_{2s}(\beta)=\hat{\mathrm{E}}_{2s}(\hat{\beta}),
\quad
\mathrm{E}_{2s+1}(\beta)=i\hat{\mathrm{E}}_{2s+1}(\hat{\beta}),
\end{equation}
%%%%%%%%%%%%%%%%%%
where the following are all real
%%%%%%%%%%%%%%%%%%
\begin{equation}
\label{eq29}
\hat{\beta}=(1+x^2)^{-1/2},
\quad
\hat{\mathrm{E}}_{2s}(\hat{\beta})
=\mathrm{E}_{2s}(-i \hat{\beta}),
\quad
\hat{\mathrm{E}}_{2s+1}(\hat{\beta})
=-i\mathrm{E}_{2s+1}(-i \hat{\beta}).
\end{equation}
%%%%%%%%%%%%%%%%%%
From (\ref{eq09}) and (\ref{eq10}) the first two are
%%%%%%%%%%%%%%%%%%
\begin{equation}
\label{eq30}
\hat{\mathrm{E}}_{1}(\hat{\beta})
=\tfrac{1}{24}\hat{\beta}
\left(5\hat{\beta}^{2}-3\right),
\end{equation}
%%%%%%%%%%%%%%%%%%
and
%%%%%%%%%%%%%%%%%%
\begin{equation}
\label{eq31}
\hat{\mathrm{E}}_{2}(\hat{\beta})=
\tfrac{1}{16}\hat{\beta}^{2}
\left(5\hat{\beta}^{2}-1\right)
\left(1-\hat{\beta}^{2}\right).
\end{equation}
%%%%%%%%%%%%%%%%%%
Incidentally, from (\ref{eq27}) and (\ref{eq28}) we note that
%%%%%%%%%%%%%%%%
\begin{equation}
\label{eq32}
\rho=\frac{1}{\hat{\beta}}
+\frac12 \ln\left(\frac{1-\hat{\beta}}
{1+\hat{\beta}}\right).
\end{equation}
%%%%%%%%%%%%%%%%%%

Our desired asymptotic expansion is given as follows.

\begin{theorem}
%%%%%%%%%%%%%%%%%%
\begin{equation}
\label{eq33}
J_{i\nu}(\nu x) =
R(\nu,x) \, e^{i \Theta(\nu,x)},
\end{equation}
%%%%%%%%%%%%%%%%%%
where
%%%%%%%%%%%%%%%%%%
\begin{equation}
\label{eq34}
R(\nu,x) =
\left\vert J_{i\nu}(\nu x) \right\vert
\sim 
\frac{1}{(2 \pi \nu)^{1/2}
\left(x^2+1\right)^{1/4}}
\exp \left\{\frac{\nu \pi}{2}
+\sum\limits_{s=1}^{\infty}
{\frac{\hat{\mathrm{E}}_{2s}(\hat{\beta})}
{\nu ^{2s}}}\right\},
\end{equation}
%%%%%%%%%%%%%%%%%%
and
%%%%%%%%%%%%%%%%%%
\begin{equation}
\label{eq35}
\Theta(\nu,x)
=\arg\left\{J_{i\nu}(\nu x)\right\}
\sim 
\nu \rho(x) -\frac{\pi}{4}
+\sum\limits_{s=0}^{\infty}
{\frac{\hat{\mathrm{E}}_{2s+1}(\hat{\beta})}
{\nu ^{2s+1}}},
\end{equation}
%%%%%%%%%%%%%%%%%%
as $\nu \to \infty$ uniformly for $0<x<\infty$. Here $\rho(x)$, $\hat{\beta}$ and the coefficients $\hat{\mathrm{E}}_{s}(\hat{\beta})$ are defined by (\ref{eq09}) - (\ref{eq11}), (\ref{eq27}) and (\ref{eq29}).
\end{theorem}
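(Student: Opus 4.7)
The plan is to substitute $z=ix$ into the LG expansion (\ref{eq23}) for $I_{i\nu}(\nu z)$, combine with the connection formula (\ref{eq25}), and then use the parity structure of the coefficients $\mathrm{E}_{s}(\beta)$ to separate modulus from argument. Since $z=ix$ with $x>0$ sits on the ray $\arg(z)=\tfrac12\pi$, which lies in the closure of $Z_{1}$ (excluding only the turning point, which corresponds here to no finite $x$), the expansion (\ref{eq23}) applies and its error term is $\mathcal{O}(\nu^{-n})$ uniformly in $0<x<\infty$.

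First I would evaluate each ingredient of (\ref{eq23}) at $z=ix$. From (\ref{eq01}) or (\ref{eq04}) with the appropriate limiting argument one obtains $\xi(ix)=-\tfrac12\pi+i\rho(x)$, with $\rho$ as in (\ref{eq27}); substitution gives $\nu\xi+\tfrac12\pi\nu=i\nu\rho$, and multiplication by the $e^{\pi\nu/2}$ in (\ref{eq25}) produces the $\exp(\tfrac12\pi\nu)$ appearing in (\ref{eq34}). From (\ref{eq06}) and the branch convention (positive on $(1,\infty)$, cut on $[-1,1]$), analytic continuation through the right half-plane to $z=ix$ gives $(z^{2}-1)^{1/2}=i(x^{2}+1)^{1/2}$, hence $\beta=-i\hat{\beta}$ and $(z^{2}-1)^{1/4}=e^{i\pi/4}(x^{2}+1)^{1/4}$. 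The reciprocal of the latter supplies both the $(x^{2}+1)^{-1/4}$ in (\ref{eq34}) and the $-\tfrac14\pi$ contribution to $\Theta$ in (\ref{eq35}). Next, using the parity identities (\ref{eq28})–(\ref{eq29}), I would split the polynomial series term by term:
\begin{equation*}
\sum_{s=1}^{n-1}\frac{\mathrm{E}_{s}(\beta)}{\nu^{s}}
=\sum_{2s\le n-1}\frac{\hat{\mathrm{E}}_{2s}(\hat{\beta})}{\nu^{2s}}
+i\sum_{2s+1\le n-1}\frac{\hat{\mathrm{E}}_{2s+1}(\hat{\beta})}{\nu^{2s+1}},
\end{equation*}
which exhibits the real/imaginary decomposition that produces precisely the asymptotic series in (\ref{eq34}) and (\ref{eq35}) upon exponentiation.

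Finally, I would handle the correction factor $(1+\eta_{n,1}(\nu,ix))/(1+\eta_{n,1}(\nu,\infty))$: by the bound (\ref{eq15}) both numerator and denominator equal $1+\mathcal{O}(\nu^{-n})$ uniformly in $x>0$, so the quotient is $1+\mathcal{O}(\nu^{-n})$; writing this as $\exp\{\mathcal{O}(\nu^{-n})\}$ and combining with the finite sums above yields, for every $n$, a complex asymptotic approximation whose real and imaginary parts of the logarithm agree through order $\nu^{-(n-1)}$ with (\ref{eq34}) and (\ref{eq35}). Letting $n\to\infty$ in the usual Poincaré sense delivers the stated asymptotic expansions, and since $R(\nu,x)>0$ and the series for $\Theta(\nu,x)$ is real, these are unambiguously the modulus and a representative of the argument of $J_{i\nu}(\nu x)$.

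The main obstacle I expect is the careful branch tracking at $z=ix$: one must verify that the chosen square- and fourth-root branches of $z^{2}-1$ are consistent with the branch conventions established after (\ref{eq01}) and (\ref{eq06}), and in particular that the factor $e^{-i\pi/4}$ emerging from $(z^{2}-1)^{-1/4}$ is exactly what is needed to produce the $-\tfrac14\pi$ in (\ref{eq35}). Once this bookkeeping is done, the rest of the argument is an algebraic separation of real and imaginary parts combined with the uniform error estimates already proved in the previous theorem.
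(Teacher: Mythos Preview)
Your approach is essentially the same as the paper's: substitute $z=ix$ into the LG expansion (\ref{eq23}), invoke the connection formula (\ref{eq25}), and use the parity relations (\ref{eq28})--(\ref{eq29}) to split the exponent into real and imaginary parts. The paper's proof cites exactly these ingredients.

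There is one small but genuine point the paper addresses that you stop just short of. You correctly note that your computation yields ``a representative of the argument,'' and you flag the branch tracking for $(z^{2}-1)^{-1/4}$ as the delicate step. However, even once the branch is fixed so that $e^{-i\pi/4}$ emerges, the identification $\Theta(\nu,x)=\arg\{J_{i\nu}(\nu x)\}$ is only determined modulo $2\pi$: the asymptotic calculation alone cannot distinguish $-\tfrac14\pi$ from $(2p-\tfrac14)\pi$ for nonzero integer $p$. The paper closes this gap by independently computing $\arg\{J_{i\nu}(\nu x)\}$ from the power-series behaviour \cite[Eq.~10.7.3]{NIST:DLMF} together with Stirling's formula for $\arg\Gamma(1+i\nu)$ \cite[Eq.~5.11.1]{NIST:DLMF}, obtaining
\[
\arg\{J_{i\nu}(\nu x)\}=\nu\ln(\tfrac12 x)+\nu-\tfrac14\pi+\mathcal{O}(\nu x^{2})+\mathcal{O}(\nu^{-1})
\]
as $\nu x\to 0^{+}$ and $\nu\to\infty$, and checking that this matches (\ref{eq35}) via (\ref{eq37}). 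Adding this verification would complete your argument.
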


\begin{proof}
The result follows from (\ref{eq23}), (\ref{eq25}), (\ref{eq26}), (\ref{eq28}) and (\ref{eq29}). That the constant on the RHS of (\ref{eq35}) is $-\frac14 \pi$ as opposed to $(2p-\frac14 )\pi$ for some non-zero integer $p$ can be confirmed from \cite[Eqs. 5.11.1 and 10.7.3]{NIST:DLMF} that
%%%%%%%%%%%%%%%%%%
\begin{equation}
\label{eq36}
\arg\left\{J_{i\nu}(\nu x)\right\}
=\nu \ln\left(\tfrac12 x \right) + \nu - \tfrac14 \pi
+\mathcal{O}(\nu x^2)+\mathcal{O}(\nu^{-1}),
\end{equation}
%%%%%%%%%%%%%%%%%%
as $\nu x \to 0^{+}$ and $\nu \to \infty$, which is in accord with (\ref{eq37}) and (\ref{eq35}).
\end{proof}

Now taking the first four terms in the expansion (\ref{eq34}) we have the approximation 
%%%%%%%%%%%%%%%%%%
\begin{equation}
\label{eq36c}
(2 \pi \nu)^{1/2}
e^{-\nu \pi/2}\left(x^2+1\right)^{1/4}
\left\vert J_{i\nu}(\nu x) \right\vert
\approx
\exp \left\{\sum\limits_{s=1}^{4}
{\frac{\hat{\mathrm{E}}_{2s}(\hat{\beta})}
{\nu ^{2s}}}\right\}.
\end{equation}
%%%%%%%%%%%%%%%%%%
To check the accuracy of this, the function
%%%%%%%%%%%%%%%%%%
\begin{equation}
\label{eq36d}
\Omega_{1}(\nu,x)=\log_{10}\left| (2 \pi \nu)^{1/2}
e^{-\nu \pi/2}\left(x^2+1\right)^{1/4}
\left\vert J_{i\nu}(\nu x) \right\vert
-\exp \left\{\sum\limits_{s=1}^{4}
{\frac{\hat{\mathrm{E}}_{2s}(\hat{\beta})}
{\nu ^{2s}}}\right\} \right|
\end{equation}
%%%%%%%%%%%%%%%%%%
for $\nu=10$ is depicted in \cref{fig:absJ}, and we see that the approximation (\ref{eq36c}) agrees to the exact value to about 10 decimal places or better for $x>0$. We used Maple to calculate the values of $J_{10i}(10x)$ in (\ref{eq36d}) (and similarly below in other error bound computations involving Bessel and modified Bessel functions).

\begin{figure}
 \centering
 \includegraphics[
 width=0.7\textwidth,keepaspectratio]{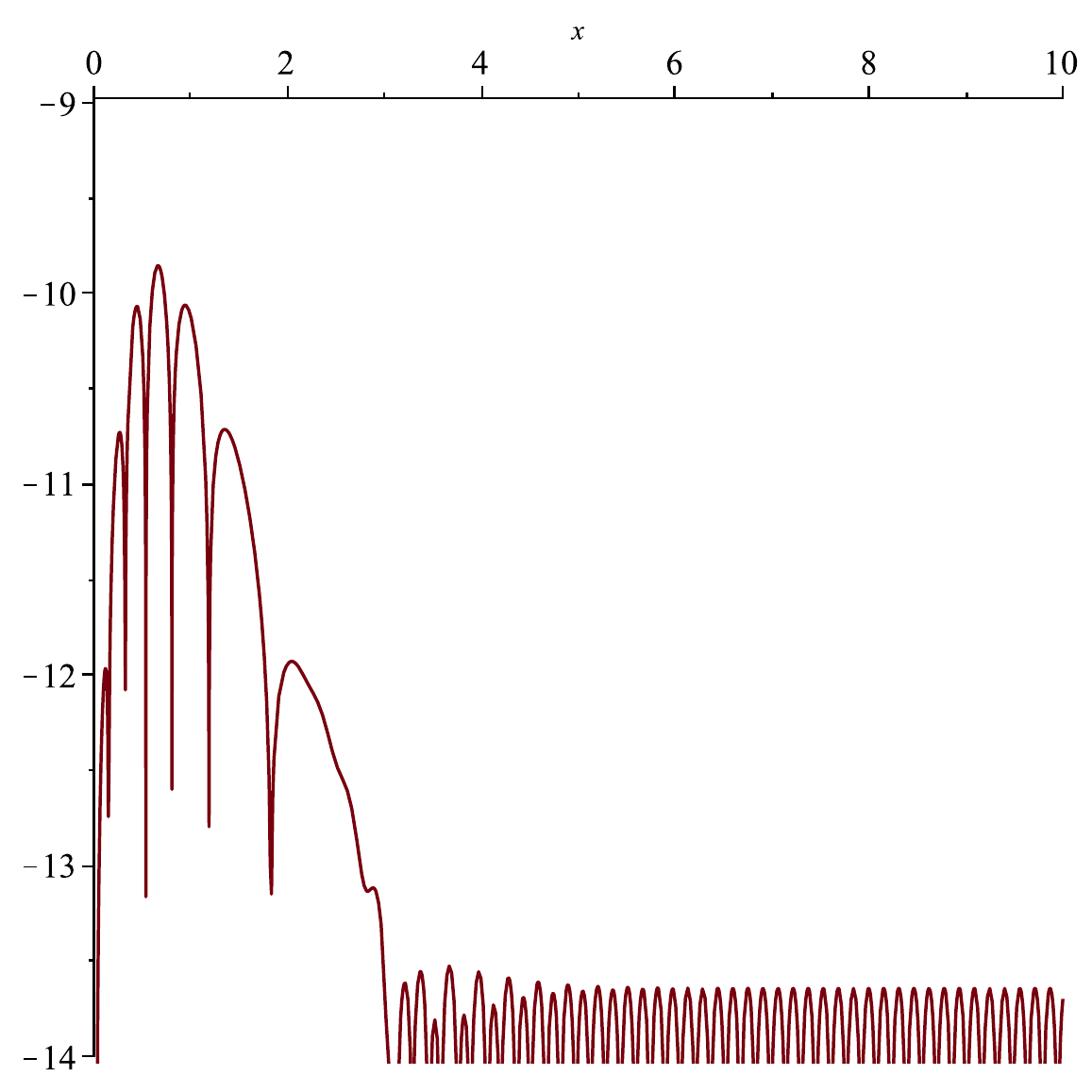}
 \caption{Graph of $\Omega_{1}(10,x)$.}
 \label{fig:absJ}
\end{figure}

Similarly, from (\ref{eq35}) it follows that 
%%%%%%%%%%%%%%%%%%
\begin{equation}
\label{eq36a}
e^{-i(\nu\rho-\frac{1}{4}\pi)}
\frac{J_{i\nu}(\nu x)}
{\left| J_{i\nu}(\nu x) \right|} \approx
\exp \left\{i\sum\limits_{s=0}^{4}
{\frac{\hat{\mathrm{E}}_{2s+1}(\hat{\beta})}
{\nu ^{2s+1}}}\right\}.
\end{equation}
%%%%%%%%%%%%%%%%%%
The precision of this approximation is illustrated in \cref{fig:argJ}, in which the graph of $\Omega_{2}(\nu,x)$ is shown for $\nu=10$, where
%%%%%%%%%%%%%%%%%%
\begin{equation}
\label{eq36b}
\Omega_{2}(\nu,x)=\log_{10}\left|
e^{-i(\nu\rho-\frac{1}{4}\pi)}
\frac{J_{i\nu}(\nu x)}
{\left| J_{i\nu}(\nu x) \right|}-
\exp \left\{i\sum\limits_{s=0}^{4}
{\frac{\hat{\mathrm{E}}_{2s+1}(\hat{\beta})}
{\nu ^{2s+1}}}\right\}\right|.
\end{equation}
%%%%%%%%%%%%%%%%%%

\begin{figure}
 \centering
 \includegraphics[
 width=0.7\textwidth,keepaspectratio]{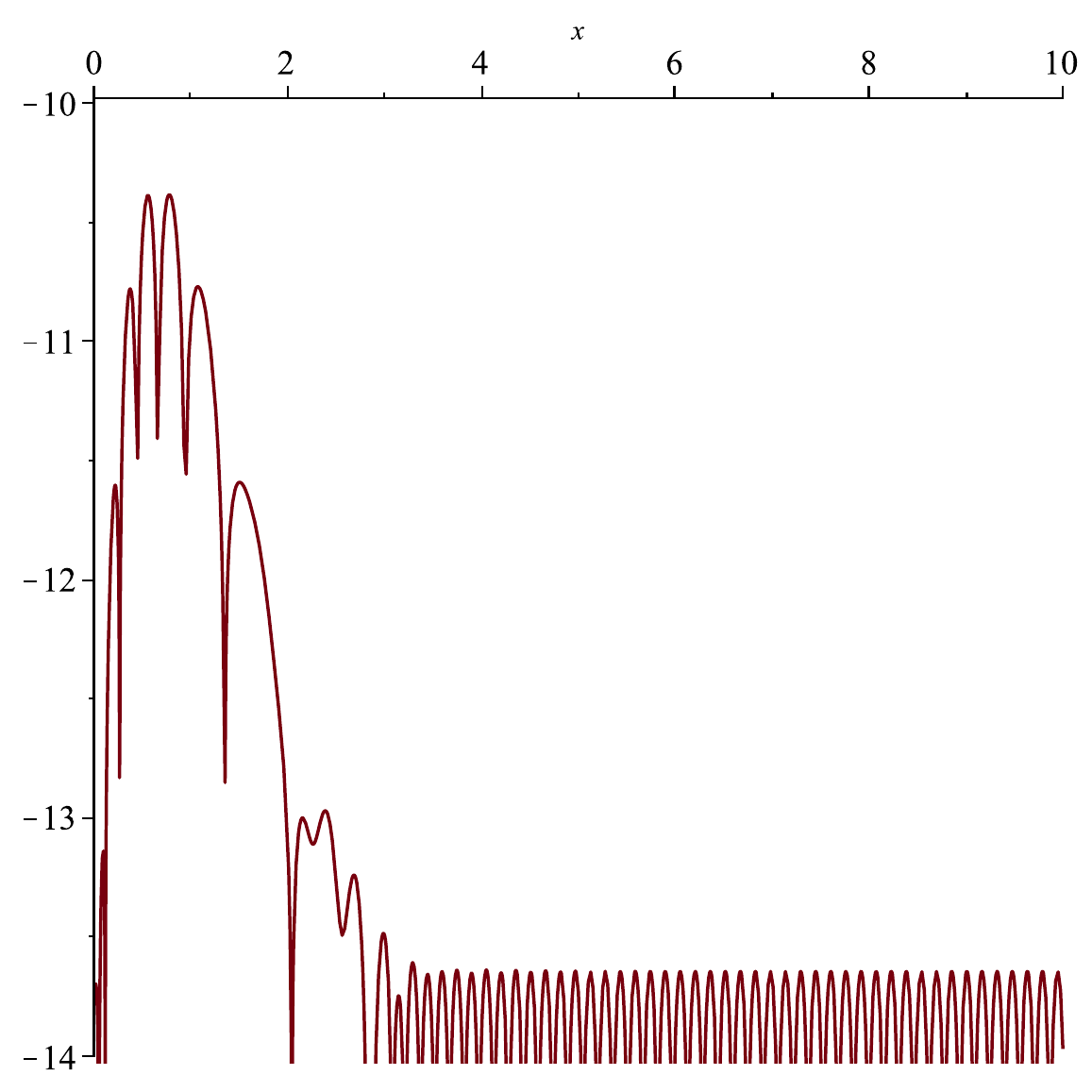}
 \caption{Graph of $\Omega_{2}(10,x)$.}
 \label{fig:argJ}
\end{figure}

The real zeros of $K_{i\nu}(t)$ will be considered in the next section, since these require Airy function expansions. Here we consider the positive zeros of $\Re\{e^{-ir \pi}J_{i\nu}(t)\}$ for $r \in [0,\frac12]$. There are an infinite number of these that approach both $x=0$ and $x=\infty$. From (\ref{eq35}) observe that they are given by $t=\nu x$ where $x$ depends on $\nu$, $m$ and $r$ and satisfies the asymptotic relation
%%%%%%%%%%%%%%%%%%
\begin{equation}
\label{eq38}
\nu \rho(x)+\sum\limits_{s=0}^{\infty}
{\frac{\hat{\mathrm{E}}_{2s+1}(\hat{\beta})}
{\nu ^{2s+1}}}
\sim
\left(m+r -\tfrac{1}{4}\right)\pi
\quad (\nu \to \infty, \, m=0,\pm 1,\pm 2,\ldots).
\end{equation}
%%%%%%%%%%%%%%%%%%
Let us label these zeros by $t=\nu x_{\nu,m}(r)$, and accordingly
%%%%%%%%%%%%%%%%%%
\begin{equation}
\label{eq38a}
\Re\left\{e^{-ir \pi}J_{i\nu}
\left(\nu x_{\nu,m}(r)\right)\right\} =0
\quad (r\in [0,\tfrac12], \, m=0,\pm 1,\pm 2,\ldots).
\end{equation}
%%%%%%%%%%%%%%%%%%
Then from (\ref{eq38}) we see that for $\nu>0$ fixed $x_{\nu,m}(r) \to 0^{+}$ as $m \to -\infty$ and $x_{\nu,m}(r) \to \infty$ as $m \to \infty$, since $\rho \to \mp\infty$ as $x \to 0^{+}$ and $x \to \infty$ respectively (see (\ref{eq37}), (\ref{eq37a})), and recalling that the coefficients $\hat{\mathrm{E}}_{s}(\hat{\beta})$ are bounded for $x \in (0,\infty)$.

Note $r=0$ and $r=\frac12$ give the zeros of $\Re\{J_{i\nu}(t)\}$ and $\Im\{J_{i\nu}(t)\}$, respectively, or equivalently the zeros of $\Im\{Y_{i\nu}(t)\}$ and $\Re\{Y_{i\nu}(t)\}$, respectively, since for positive $t$ and $\nu$
%%%%%%%%%%%%%%%%%%
\begin{equation}
\label{eq38b}
\Re\{Y_{i\nu}(t)\}
=\frac{\sinh(\nu \pi)}{\cosh(\nu \pi)-1}
\Im\{J_{i\nu}(t)\},
\end{equation}
%%%%%%%%%%%%%%%%%%
and
%%%%%%%%%%%%%%%%%%
\begin{equation}
\label{eq38c}
\Im\{Y_{i\nu}(t)\}
=-\frac{\sinh(\nu \pi)}{\cosh(\nu \pi)+1}
\Re\{J_{i\nu}(t)\},
\end{equation}
%%%%%%%%%%%%%%%%%%
(see \cite[Eq. 10.4.5]{NIST:DLMF}). These also correspond to the positive zeros of the Bessel functions $F_{i \nu}(t)$ and $G_{i \nu}(t)$, respectively, as defined by \cite[Eqs. (3.4a,b)]{Dunster:1990:BFP}. The notation here for these zeros differs from the one used in that paper.

From (\ref{eq38}) the asymptotic expansions hold in the form
%%%%%%%%%%%%%%%%%%
\begin{equation}
\label{eq39}
x_{\nu,m}(r)
\sim \sum_{s=0}^{\infty}\frac{p_{m,s}(r)}{\nu^{2s}},
\end{equation}
%%%%%%%%%%%%%%%%%%
as $\nu \to \infty$, uniformly for $m=0,\pm 1,\pm 2,\ldots$. On inserting (\ref{eq39}) into (\ref{eq35}), expanding in inverse powers of $\nu$, and equating coefficients of like powers, we recursively can determine the coefficients $p_{m,s}(r)$. A general formula for this recursion is possible, similar to that derived for the Bessel zeros in \cite{Dunster:2024:AZB}, but we do not pursue this here and shall simply record the first five terms. The first is given by
%%%%%%%%%%%%%%%%%%
\begin{equation}
\label{eq40}
p_{m,0}(r)= \rho^{-1}(M),
\end{equation}
%%%%%%%%%%%%%%%%%%
where
%%%%%%%%%%%%%%%%%%
\begin{equation}
\label{eq41}
M=M(\nu,m,r)=\nu^{-1}\left(m+r -\tfrac{1}{4}\right)\pi,
\end{equation}
%%%%%%%%%%%%%%%%%%
with $\rho^{-1}(\cdot)$ being the inverse of $\rho(x)$. For each $\nu \in (0,\infty)$, $r \in [0,\frac12]$ and $m \in \mathbb{Z}$ this inverse yields a unique value for $p_{m,0}(r) \in (0,\infty)$ since $\rho(x)$ increases monotonically from $-\infty$ to $\infty$ for $0<x<\infty$.

As $M \to \infty$ we see from (\ref{eq37a}), (\ref{eq38}) and  (\ref{eq39}) that $p_{m,0}(r) \to \infty$ such that
%%%%%%%%%%%%%%%%%%
\begin{equation}
\label{eq42}
p_{m,0}(r)= 
M+\frac{1}{2M}
-\frac{7}{24 M^3}
+ \mathcal{O}\left(\frac{1}{M^5}\right).
\end{equation}
%%%%%%%%%%%%%%%%%%
As $M \to -\infty$ one finds from (\ref{eq37}), (\ref{eq38}) and  (\ref{eq39}) that $p_{m,0}(r) \to 0^{+}$ such that
%%%%%%%%%%%%%%%%%%
\begin{equation}
\label{eq44}
p_{m,0}(r)= 
2e^{M-1}
-2 e^{3M-3}
+\mathcal{O}\left(e^{5M}\right).
\end{equation}
%%%%%%%%%%%%%%%%%%

In a similar manner it can be verified that the subsequent coefficients are of the form
%%%%%%%%%%%%%%%%%%
\begin{equation}
\label{eq45}
p_{m,s}(r)= q_{s}(p_{m,0}(r))
\quad (s=0,1,2,\ldots),
\end{equation}
%%%%%%%%%%%%%%%%%%
where $q_{0}(x)=x$, and the others are rational functions. The next four are found to be given by
%%%%%%%%%%%%%%%%%%
\begin{equation}
\label{eq46}
q_{1}(x)=\frac{x\left(3x^2-2\right)}
{24\left(x^2+1\right)^2},
\end{equation}
%%%%%%%%%%%%%%%%%%
%%%%%%%%%%%%%%%%%%
\begin{equation}
\label{eq47}
q_{2}(x)=-\frac{x
\left(465 x^6 - 4119 x^4 + 1812 x^2 - 4\right)}
{5760\left(x^2+1\right)^5},
\end{equation}
%%%%%%%%%%%%%%%%%%
%%%%%%%%%%%%%%%%%%
\begin{multline}  
\label{eq48}
q_{3}(x)=\frac{x}
{2903040\left(x^2+1\right)^8}
\left(714231 x^{10} - 19038132 x^8 
+ 46671831 x^6
\right.
\\
\left.
- 19043730 x^4 
+ 910164 x^2 - 1912 \right),
\end{multline}
%%%%%%%%%%%%%%%%%%
and
%%%%%%%%%%%%%%%%%%
\begin{multline}  
\label{eq49}
q_{4}(x)=-\frac{x}
{1393459200\left(x^2+1\right)^{11}}
\left(2542280985 x^{14} 
- 138922188885 x^{12}
\right.
\\
+ 846638961795 x^{10} - 1239519604671 x^8 + 493158930936 x^6 
\\
\left.
- 45022408056 x^4 + 452367216 x^2 + 742544 \right).
\end{multline}
%%%%%%%%%%%%%%%%%%
Graphs of $q_{s}(x)/x$ for $s=1,2,3,4$ are shown in \cref{fig:qplots}, illustrating their relative magnitudes to the leading term $q_{0}(x)=x$.

\begin{figure}
 \centering
 \includegraphics[
 width=0.7\textwidth,keepaspectratio]{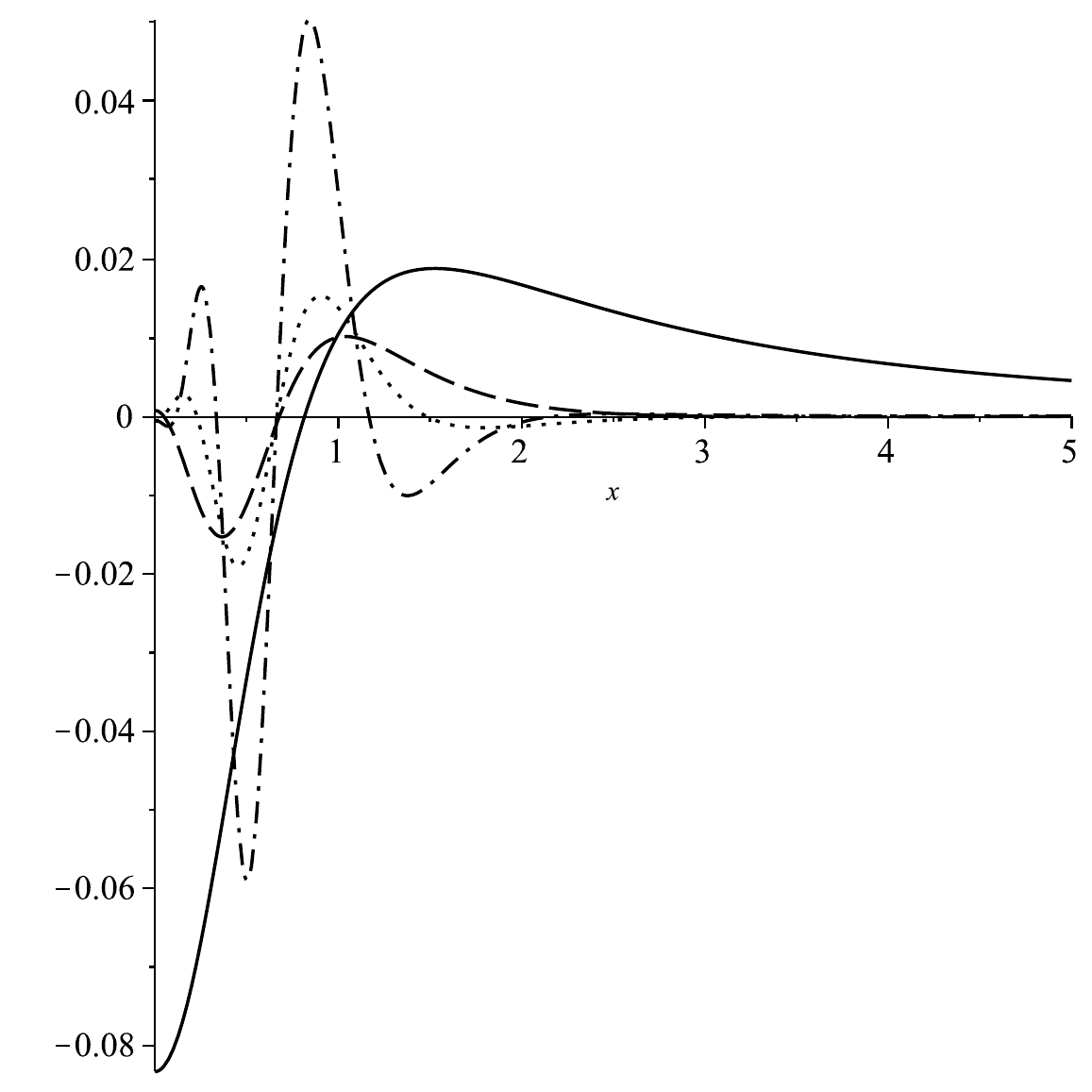}
 \caption{Graphs of $q_{s}(x)/x$ for $s=1$ (solid), $s=2$ (dashed), $s=3$ (dotted) and $s=4$ (dash-dotted).}
 \label{fig:qplots}
\end{figure}

To illustrate the uniform accuracy of the expansion (\ref{eq39}) we compute the truncated expansion
%%%%%%%%%%%%%%%%%%%%%%%%%
\begin{equation}
\label{eq50}
j_{4}^{(0)}(\nu,m) =
\nu \sum_{s=0}^{4}
\frac{p_{m,s}(0)}{\nu^{2s}},
\end{equation}
%%%%%%%%%%%%%%%%%%%%%%%%%
which is our approximation for $t=\nu x_{\nu,m}(0)$, the $m$th zero of $\Re\{J_{i\nu}(t)\}$ (since $r=0$). Thus let $\delta_{\nu,m}^{(0)}$ be the relative error in our approximation to the exact zero $\nu x_{\nu,m}(0)$ of $\Re\{J_{i\nu}( t)\}$, i.e.
%%%%%%%%%%%%%%%%%%%%%%%%%
\begin{equation}
\label{eq51}
j_{4}^{(0)}(\nu,m)=\nu x_{\nu,m}(0)
\left(1+\delta_{\nu,m}^{(0)}\right).
\end{equation}
%%%%%%%%%%%%%%%%%%%%%%%%%
Rather than computing $\delta_{\nu,m}^{(0)}$ exactly for many values of $m$ we can instead more simply estimate it by evaluating $\Delta_{\nu}^{(0)}(j_{4}^{(0)}(\nu,m))$ where
%%%%%%%%%%%%%%%%%%%%%%%%%
\begin{equation}
\label{eq52}
\Delta_{\nu}^{(0)}(t)
= \frac{\Re \{J_{i\nu}(t)\}}{t \Re \{J'_{i\nu}(t)\}}.
\end{equation}
%%%%%%%%%%%%%%%%%%%%%%%%%
In computing this it is helpful to use from \cite[Eq. 10.6.2]{NIST:DLMF} that for real $\nu$ and positive $t$  
%%%%%%%%%%%%%%%%%%%%%%%%%
\begin{equation}
\label{eq53}
\Re \{J'_{i\nu}(t)\}=-\Re\{J_{1+i\nu}(t)\}
-(\nu/t)\Im\{J_{i\nu}(t)\}.
\end{equation}
%%%%%%%%%%%%%%%%%%%%%%%%%

\begin{figure}
 \centering
 \includegraphics[
 width=0.7\textwidth,keepaspectratio]{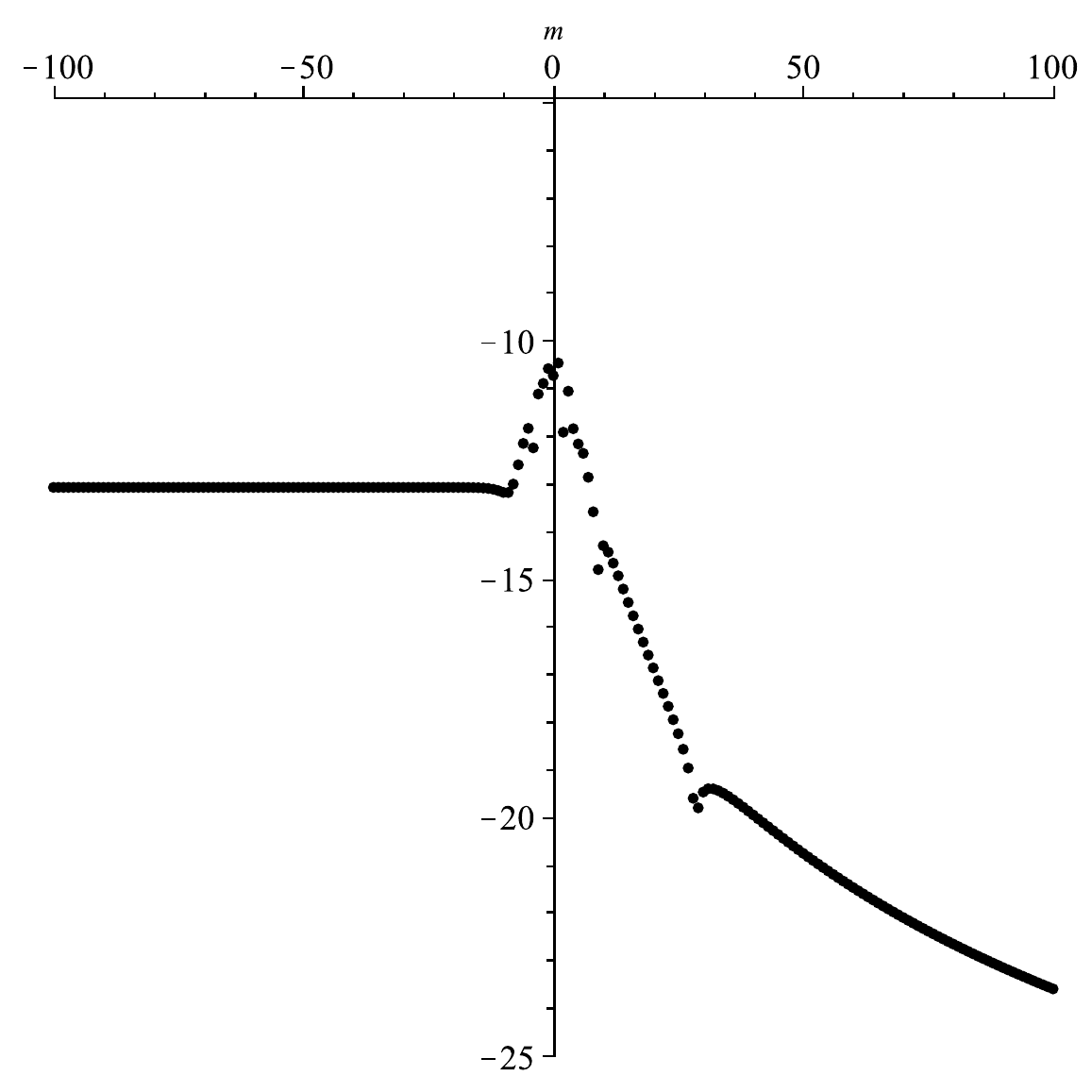}
 \caption{Plots of $\log_{10}|\Delta_{10}^{(0)}(j_{4}^{(0)}(10,m))|$ for $m =-100, -99 , \dots 99,100$.}
 \label{fig:LGzeros}
\end{figure}

To see why $\Delta_{\nu}^{(0)}(j_{4}^{(0)}(\nu,m))
\approx \delta_{\nu,m}^{(0)}$ first note $\Re\{J_{i\nu}(\nu x_{\nu,m}(0))\}=0$, and hence from (\ref{eq52}) $\Delta_{\nu}^{(0)}(\nu x_{\nu,m}(0))=0$. Thus by Taylor's theorem
%%%%%%%%%%%%%%%%%%%%%%%%%
\begin{equation}
\label{eq54}
\Delta_{\nu}^{(0)}(j_{4}^{(0)}(\nu,m))=
\Delta_{\nu}^{(0)}\left(\nu x_{\nu,m}
(0)(1+\delta_{\nu,m}^{(0)})\right)
= \delta_{\nu,m}^{(0)}
+ \mathcal{O}\left(\{\delta_{\nu,m}^{(0)}\}^{2}\right),
\end{equation}
%%%%%%%%%%%%%%%%%%%%%%%%%
which confirms our assertion. For example, for $\nu=10$ and $m=\pm 20$ we obtain $j_{4}^{(0)}(10,-20)=0.0012691448 \cdots$ and $j_{4}^{(0)}(10,20)=6.2842314194 \cdots$, from which
%%%%%%%%%%%%%%%%%%%%%%%%%
\begin{equation}
\label{eq56}
\Delta_{10}^{(0)}(j_{4}^{(0)}(10,-20))
= \mathbf{8.120851953268}283\cdots\times 10^{-14},
\end{equation}
%%%%%%%%%%%%%%%%%%%%%%%%%
and
%%%%%%%%%%%%%%%%%%%%%%%%%
\begin{equation}
\label{eq57}
\Delta_{10}^{(0)}(j_{4}^{(0)}(10,20))
= \mathbf{1.33246199579953564}633\cdots\times 10^{-17},
\end{equation}
%%%%%%%%%%%%%%%%%%%%%%%%%
with the digits in bold agreeing with the exact relative error. The latter values were found in Maple with Digits set to 40 and numerically solving  for small $\epsilon$ the equations (see (\ref{eq38a}) and (\ref{eq51}))
%%%%%%%%%%%%%%%%%%%%%%%%%
\begin{equation}
\label{eq58}
\Re\left\{J_{10i}\left(j_{4}^{(0)}(10,\pm 20)
(1+\epsilon)^{-1}\right) \right\}= 0.
\end{equation}
%%%%%%%%%%%%%%%%%%%%%%%%%

A graph of the values of $\log_{10}|\Delta_{\nu}^{(0)}(j_{4}^{(0)}(\nu,m))|$ for integer $m \in [-100,100]$ is shown in \cref{fig:LGzeros} for $\nu=10$. We observe uniformly high accuracy in our approximations for all these values of $m$, with more than 10 digits throughout, and significantly better as $m$ increases through positive values.

\section{Airy Expansions}
\label{sec:AiryExpansions}

Here we obtain Airy function expansions for all standard solutions of (\ref{eq01}) which are valid at the turning point $z=1$, and which involve coefficients and error bounds that are simpler to evaluate than previous results. In doing so the following Airy function connection formula (\cite[Eq. 9.2.12]{NIST:DLMF}) plays an important role
%%%%%%%%%%%%%%%%%%
\begin{equation}
\label{eq61}
\mathrm{Ai}(z) 
=e^{\pi i/3}\mathrm{Ai}_{1}(z) 
+e^{-\pi i/3}\mathrm{Ai}_{-1}(z),
\end{equation}
%%%%%%%%%%%%%%%%%%
where $\mathrm{Ai}_{j}(z) =\mathrm{Ai}(z e^{-2\pi ij/3})$ ($j=0,\pm 1$). We also shall use (\cite[Eq. 9.2.10]{NIST:DLMF})
%%%%%%%%%%%%%%%%%%
\begin{equation}
\label{eq62}
\mathrm{Bi}(z) 
=e^{-\pi i/6}\mathrm{Ai}_{1}(z) 
+e^{\pi i/6}\mathrm{Ai}_{-1}(z).
\end{equation}
%%%%%%%%%%%%%%%%%%

Our results employ non-standard asymptotic expansions for the Airy functions where the series appear in the argument of an exponential, as opposed to the usual Poincar\'e forms where the asymptotic series multiplies the leading exponential term (see for example \cite[Sec. 9.7]{NIST:DLMF}). 

These expansions are given as follows. Firstly let $\xi=\frac{2}{3}\zeta^{3/2}$, where in what follows both are general complex-valued variables, but later will be the specific functions of $z$ given by (\ref{eq01}). Then from \cite[Thm. 2.4]{Dunster:2021:SEB} the appropriate Airy function expansions are given by
%%%%%%%%%%%
\begin{equation}
\label{eq63}
\mathrm{Ai}\left(\nu^{2/3}\zeta\right) 
=\frac{1}{2\pi ^{1/2}\nu^{1/6}\zeta^{1/4}}
\exp \left\{ -\nu\xi +\sum\limits_{s=1}^{n-1}{(-1) ^{s}
\frac{a_{s}}{s \nu^{s}\xi ^{s}}}\right\} 
\left\{ 1+\eta_{n}^{(0)}(\nu,\xi)\right\},
\end{equation}
%%%%%%%%%%%
and 
%%%%%%%%%%%
\begin{equation}
\label{eq64}
\mathrm{Ai}^{\prime }\left( \nu^{2/3}\zeta\right)
=-\frac{\nu^{1/6}\zeta ^{1/4}}
{2\pi ^{1/2}}\exp \left\{-\nu\xi
+\sum\limits_{s=1}^{n-1}{(-1)^{s}
\frac{\tilde{a}_{s}}{s \nu^{s}\xi ^{s}}}\right\} 
\left\{ 1+\tilde{\eta}_{n}^{(0)}(\nu,\xi) \right\},
\end{equation}
%%%%%%%%%%%
where $\eta_{n}^{(0)}(\nu,\xi)$ and $\tilde{\eta}_{n}^{(0) }(\nu,\xi)$ are $\mathcal{O}\{(\nu\xi)^{-n}\}$ ($n=2,3,4,\ldots$) as $\nu \xi \to \infty$, uniformly for $|\arg(\zeta)|\leq \frac{2}{3}\pi $ (or equivalently $|\arg(\xi) | \leq \pi $). These regions are not maximal, but suffice for our purposes (and similarly for (\ref{eq66}) - (\ref{eq69}) below).

Here $a_{1}=a_{2}=\frac{5}{72}$, $\tilde{a}_{1}=\tilde{a}_{2}=-\frac{7}{72}$, and subsequent terms $a_{s}$ and $\tilde{{a}}_{s}$ ($s=3,4,5,\ldots $) both satisfy the  recursion formula
%%%%%%%%%%%%%%%%%%
\begin{equation}
\label{eq65}
b_{s+1}=\frac{1}{2}\left(s+1\right) b_{s}+\frac{1}{2}
\sum\limits_{j=1}^{s-1}{b_{j}b_{s-j}} 
\quad (s=2,3,4,\ldots).
\end{equation}
%%%%%%%%%%%%%%%%%%

Similarly, for $j=1$ and $n=2,3,4,\ldots$,
%%%%%%%%%%%%%%%%%%
\begin{equation}
\label{eq66}
\mathrm{Ai}_{1}\left( \nu^{2/3}\zeta \right) 
=\frac{e^{\pi i/6}}{2\pi^{1/2} \nu^{1/6}
\zeta^{1/4}}\exp \left\{ \nu\xi 
+\sum\limits_{s=1}^{n-1}\frac{a_{s}}
{s \nu^{s}\xi^{s}}\right\} 
\left\{ 1+\eta_{n}^{(1)}(\nu,\xi) \right\},
\end{equation}
%%%%%%%%%%%%%%%%%%
and 
%%%%%%%%%%%%%%%%%%
\begin{equation}
\label{eq67}
\mathrm{Ai}_{1}^{\prime }
\left( \nu^{2/3}\zeta \right) 
=\frac{e^{\pi i/6} \nu^{1/6}\zeta ^{1/4}}{2\pi ^{1/2}}
\exp \left\{ \nu\xi+\sum\limits_{s=1}^{n-1}\frac{\tilde{a}_{s}}{s \nu^{s}\xi ^{s}}\right\}
\left\{ 1+\tilde{\eta}_{n}^{(1) }(\nu,\xi) \right\},
\end{equation}
%%%%%%%%%%%%%%%%%
where the error terms are also $\mathcal{O}\{(\nu\xi)^{-n}\}$ as $\nu \xi \to \infty$, uniformly for a sector that contains $0 \leq \arg(\xi) \leq \frac12 \pi$.

For $j=-1$ and $n=2,3,4,\ldots$ we have
%%%%%%%%%%%%%%%%%%
\begin{multline}
\label{eq68}
\mathrm{Ai}_{-1}\left( \nu^{2/3}\zeta \right) 
=\frac{e^{\pi i/3}}{2\pi^{1/2} \nu^{1/6}
\zeta^{1/4}}\exp \left\{ -\nu\xi 
+\sum\limits_{s=1}^{n-1}(-1)^{s}\frac{a_{s}}
{s \nu^{s}\xi^{s}}\right\} 
\\ \times
\left\{ 1+\eta_{n}^{(-1)}(\nu,\xi) \right\},
\end{multline}
%%%%%%%%%%%%%%%%%%
and 
%%%%%%%%%%%%%%%%%%
\begin{multline}
\label{eq69}
\mathrm{Ai}_{-1}^{\prime }
\left( \nu^{2/3}\zeta \right) 
=-\frac{e^{\pi i/3} \nu^{1/6}\zeta ^{1/4}}{2\pi ^{1/2}}
\exp \left\{ -\nu\xi+\sum\limits_{s=1}^{n-1}
(-1)^{s}\frac{\tilde{a}_{s}}{s \nu^{s}\xi ^{s}}\right\}
\\ \times
\left\{ 1+\tilde{\eta}_{n}^{(-1) }(\nu,\xi) \right\},
\end{multline}
%%%%%%%%%%%%%%%%%%
where $\eta_{n}^{(-1)}(\nu,\xi)$ and $\tilde{\eta}_{n}^{(-1) }(\nu,\xi)$ are $\mathcal{O}\{(\nu\xi)^{-n}\}$ as $\nu \xi \to \infty$,  uniformly for a sector containing $\pi  \leq  \arg (\xi)  \leq \frac32\pi$. Explicit and readily computable bounds for the error terms in (\ref{eq63}) - (\ref{eq69}) are furnished by \cite[Thm. 2.4]{Dunster:2021:SEB}.

In order to construct the desired expansions we utilise the well-known connection formula \cite[Eq. 10.27.4]{NIST:DLMF}
%%%%%%%%%%%
\begin{equation}
\label{eq70}
\frac{2 \sinh(\nu \pi)}{\pi i} K_{i\nu}(z)
=I_{i\nu}(z)-I_{-i\nu}(z).
\end{equation}
%%%%%%%%%%%
Also from \cite{Dunster:1990:BFP} we approximate a companion function (see also \cite[Sec. 10.45]{NIST:DLMF}) given by
%%%%%%%%%%%
\begin{equation}
\label{eq71}
\frac{2 \sinh(\nu \pi)}{\pi} L_{i\nu}(z)
=I_{i\nu}(z)+I_{-i\nu}(z).
\end{equation}
%%%%%%%%%%%
We remark that $K_{i\nu}(x)$ and $L_{i\nu}(x)$ are both real-valued and form a numerically satisfactory pair of solutions of the modified Bessel equation for $\nu>0$ and $0<x<\infty$.

We now introduce the following four functions, which will approximate the modified Bessel functions, namely
%%%%%%%%%%%%%%%%%%
\begin{equation}
\label{eq59}
w_{l}(\nu,z)
=\mathrm{Ai}_{l} \left(\nu^{2/3}\zeta\right)
A(\nu,z)+\mathrm{Ai}'_{l} 
\left(\nu^{2/3}\zeta \right)B(\nu,z)
\quad (l=0,\pm 1),
\end{equation}
%%%%%%%%%%%%%%%%%%
and
%%%%%%%%%%%%%%%%%%
\begin{equation}
\label{eq60}
w_{2}(\nu,z)
=\mathrm{Bi} \left(\nu^{2/3}\zeta\right)
A(\nu,z)+\mathrm{Bi}'
\left(\nu^{2/3}\zeta \right)B(\nu,z).
\end{equation}
%%%%%%%%%%%%%%%%%%
Here $A(\nu,z)$ and $B(\nu,z)$ are arbitrary, but we now specify them uniquely via the carefully selected pair of equations
%%%%%%%%%%%
\begin{equation}
\label{eq72}
I_{i\nu}(\nu z)
= \frac{2^{1/2} e^{-\pi i/6}  e^{\nu \pi/2}}{\nu^{1/3}}
\left(\frac{\zeta}{z^2-1}\right)^{1/4} w_{1}(\nu,z),
\end{equation}
%%%%%%%%%%%
and
%%%%%%%%%%%
\begin{equation}
\label{eq73}
I_{-i\nu}(\nu z)
=\frac{2^{1/2} e^{\pi i/6}  e^{\nu \pi/2}}{\nu^{1/3}}
\left(\frac{\zeta}{z^2-1}\right)^{1/4} w_{-1}(\nu,z),
\end{equation}
%%%%%%%%%%%
With these choices we shall show that $A(\nu,z)$ and $B(\nu,z)$ are slowly varying for large $\nu$ with simple asymptotic expansions in the right half principal plane. However first we observe from (\ref{eq61}), (\ref{eq70}), (\ref{eq59}), (\ref{eq72}) and (\ref{eq73}) that
%%%%%%%%%%%
\begin{equation}
\label{eq74}
K_{i\nu}(\nu z)
=\frac{\pi e^{\nu \pi/2}}
{2^{1/2}\nu^{1/3}\sinh(\nu \pi)}
\left(\frac{\zeta}{z^2-1}\right)^{1/4} w_{0}(\nu,z).
\end{equation}
%%%%%%%%%%%
Further, from (\ref{eq62}), (\ref{eq71}) and (\ref{eq59}) - (\ref{eq73})
%%%%%%%%%%%
\begin{equation}
\label{eq75}
L_{i\nu}(\nu z)
=\frac{ \pi e^{\nu \pi/2}}
{2^{1/2}\nu^{1/3}\sinh(\nu \pi)}
\left(\frac{\zeta}{z^2-1}\right)^{1/4} w_{2}(\nu,z).
\end{equation}
%%%%%%%%%%%

Next, on solving the pair of equations (\ref{eq72}) and (\ref{eq73}), as well as referring to (\ref{eq59}) and the Airy function Wronskian relation \cite[Eq. 9.2.9]{NIST:DLMF}, we obtain the explicit representations
%%%%%%%%%%%
\begin{multline}
\label{eq76}
A(\nu,z)
=-\frac{2^{1/2} \pi \nu^{1/3}}{e^{\nu \pi/2}}
\left(\frac{z^2-1}{\zeta}\right)^{1/4}
\left\{e^{-\pi i/3}   I_{i\nu}(\nu z)
\mathrm{Ai}_{-1}^{\prime }\left( \nu^{2/3}\zeta\right)
\right.
\\
\left. +e^{\pi i/3} I_{-i\nu}(\nu z)
\mathrm{Ai}_{1}^{\prime }\left( \nu^{2/3}\zeta\right)
\right\},
\end{multline}
%%%%%%%%%%%
and 
%%%%%%%%%%%
\begin{multline}
\label{eq77}
B(\nu,z)
=\frac{2^{1/2} \pi \nu^{1/3}}{e^{\nu \pi/2}}
\left(\frac{z^2-1}{\zeta}\right)^{1/4}
\left\{e^{-\pi i/3}   I_{i\nu}(\nu z)
\mathrm{Ai}_{-1}\left( \nu^{2/3}\zeta\right)
\right.
\\
\left. + e^{\pi i/3} I_{-i\nu}(\nu z)
\mathrm{Ai}_{1}\left( \nu^{2/3}\zeta\right)
\right\}.
\end{multline}
%%%%%%%%%%%
From these it readily verified that both are real for $0<z<\infty$ ($\arg(z)=0$), and analytic in the half plane $|\arg(z)|\leq \frac12 \pi$ except at $z=0$. We shall demonstrate they are also bounded in the principal right half-plane, including at $z=0$.

To this end, from (\ref{eq59}), (\ref{eq72}), (\ref{eq74}) and \cite[Eq. 9.2.8]{NIST:DLMF} we obtain in a similar manner to the derivation of (\ref{eq76}) and (\ref{eq77}) the alternative representations
%%%%%%%%%%%
\begin{multline}
\label{eq78}
A(\nu,z)
=\frac{2^{1/2}\nu^{1/3}}{e^{\nu \pi/2}}
\left(\frac{z^2-1}{\zeta}\right)^{1/4}
\left\{2 e^{-\pi i/6} \sinh(\nu \pi) K_{i\nu}(\nu z)
\mathrm{Ai}_{1}^{\prime }\left( \nu^{2/3}\zeta\right)
\right.
\\
\left. - \pi  I_{i\nu}(\nu z)
\mathrm{Ai}^{\prime }\left( \nu^{2/3}\zeta\right)
\right\},
\end{multline}
%%%%%%%%%%%
and
%%%%%%%%%%%
\begin{multline}
\label{eq79}
B(\nu,z)
=\frac{2^{1/2}\nu^{1/3}}{e^{\nu \pi/2}}
\left(\frac{z^2-1}{\zeta}\right)^{1/4}
\left\{\pi  I_{i\nu}(\nu z)
\mathrm{Ai}\left( \nu^{2/3}\zeta\right)
\right.
\\
\left. 
-2 e^{-\pi i/6} \sinh(\nu \pi) K_{i\nu}(\nu z)
\mathrm{Ai}_{1}\left( \nu^{2/3}\zeta\right)
\right\}.
\end{multline}
%%%%%%%%%%%

The idea is to use (\ref{eq76}) and (\ref{eq77}) in a domain where $I_{i\nu}(\nu z)$ and $I_{-i\nu}(\nu z)$ form a numerically satisfactory pair, namely for $z \in Z_{-1} \cap Z_{1}$, and to use (\ref{eq78}) and (\ref{eq79}) for $z \in Z_{0} \cap Z_{1}$ where $I_{i\nu}(\nu z)$ and $K_{i\nu}(\nu z)$ form a numerically satisfactory pair.

With this in mind, following \cite[Eqs. (1.16) - (1.18)]{Dunster:2021:SEB} for $s=1,2,3,\ldots$ we define
%%%%%%%%%%%%%%%%%%
\begin{equation}
\label{eq80}
\mathcal{E}_{s}(z) =\mathrm{E}_{s}(\beta) +
(-1)^{s}a_{s}s^{-1}\xi^{-s},
\end{equation}
%%%%%%%%%%%%%%%%%%
and
%%%%%%%%%%%%%%%%%%
\begin{equation}
\label{eq81}
\tilde{\mathcal{E}}_{s}(z) =\mathrm{E}_{s}(\beta)
+(-1)^{s}\tilde{a}_{s}s^{-1}\xi^{-s}.
\end{equation}
%%%%%%%%%%%%%%%%%%
Let $n=2,3,4,\ldots$; then from (\ref{eq22}) - (\ref{eq24}), (\ref{eq63}), (\ref{eq64}), (\ref{eq66}) - (\ref{eq69}), and (\ref{eq76}) - (\ref{eq81}) we obtain
%%%%%%%%%%%
\begin{multline}
\label{eq82}
A(\nu,z)
=\frac12 \exp \left\{ \sum\limits_{s=1}^{n-1}
\frac{\tilde{\mathcal{E}}_{2s}(z) }{\nu^{2s}}\right\}
\left[  
\exp \left\{ \sum\limits_{s=0}^{n-1}
\frac{\tilde{\mathcal{E}}_{2s+1}(z) }
{\nu^{2s+1}}\right\}
\left(1+\tilde{\epsilon}_{2n,1}(\nu,z)\right)
\right.
\\
\left. 
+\exp \left\{ -\sum\limits_{s=0}^{n-1}
\frac{\tilde{\mathcal{E}}_{2s+1}(z) }
{\nu^{2s+1}}\right\}
\left(1+\tilde{\epsilon}_{2n,2}(\nu,z)\right)
 \right],
\end{multline}
%%%%%%%%%%%
and
%%%%%%%%%%%
\begin{multline}
\label{eq83}
B(\nu,z)
=\frac{1}{2\nu^{1/3}\zeta^{1/2}} 
\exp \left\{ \sum\limits_{s=1}^{n-1}
\frac{\mathcal{E}_{2s}(z) }{\nu^{2s}}\right\}
\left[ 
\exp \left\{\sum\limits_{s=0}^{n-1}
\frac{\mathcal{E}_{2s+1}(z) }
{\nu^{2s+1}}\right\}
\left(1+\epsilon_{2n,1}(\nu,z)\right)
\right.
\\
\left. 
-\exp \left\{-\sum\limits_{s=0}^{n-1}
\frac{\mathcal{E}_{2s+1}(z) }
{\nu^{2s+1}}\right\}
\left(1+\epsilon_{2n,2}(\nu,z)\right)
\right],
\end{multline}
%%%%%%%%%%%
where for $z \in Z_{0} \cap Z_{1}$
%%%%%%%%%%%
\begin{equation}
\label{eq84}
\epsilon_{n,1}(\nu,z)
=\frac{ \{1+\eta_{n,1}(\nu,z)\}
\{1+\eta_{n}^{(0)}(\nu,\xi)\}}
{1+\eta_{n,1}(\nu,\infty)}-1,
\end{equation}
%%%%%%%%%%%
%%%%%%%%%%%
\begin{equation}
\label{eq85}
\epsilon_{n,2}(\nu,z)
=\{1-e^{-2\nu \pi}\}
\{ 1+\eta_{n,0}(\nu,z)\}
\{ 1+\eta_{n}^{(1) }(\nu,\xi)\}-1,
\end{equation}
%%%%%%%%%%%
%%%%%%%%%%%
\begin{equation}
\label{eq86}
\tilde{\epsilon}_{n,1}(\nu,z)
=\frac{ \{1+\eta_{n,1}(\nu,z)\}
\{1+\tilde{\eta}_{n}^{(0)}(\nu,\xi)\}}
{1+\eta_{n,1}(\nu,\infty)}-1,
\end{equation}
%%%%%%%%%%%
%%%%%%%%%%%
\begin{equation}
\label{eq87}
\tilde{\epsilon}_{n,2}(\nu,z)
=\{1-e^{-2\nu \pi}\}
\{ 1+\eta_{n,0}(\nu,z)\}
\{ 1+\tilde{\eta}_{n}^{(1) }(\nu,\xi)\}-1,
\end{equation}
%%%%%%%%%%%
and for $z \in Z_{-1} \cap Z_{1}$
%%%%%%%%%%%
\begin{equation}
\label{eq88}
\epsilon_{n,1}(\nu,z)
=\frac{ \{1+\eta_{n,1}(\nu,z)\}
\{1+\eta_{n}^{(-1)}(\nu,\xi)\}}
{1+\eta_{n,1}(\nu,\infty)}-1,
\end{equation}
%%%%%%%%%%%
%%%%%%%%%%%
\begin{equation}
\label{eq89}
\epsilon_{n,2}(\nu,z)
=\frac{\{ 1+\eta_{n,-1}(\nu,z)\}
\{ 1+\eta_{n}^{(1)}(\nu,\xi)\}}
{1+\eta_{n,-1}(\nu,\infty e^{\pi i})}-1,
\end{equation}
%%%%%%%%%%%
%%%%%%%%%%%
\begin{equation}
\label{eq90}
\tilde{\epsilon}_{n,1}(\nu,z)
=\frac{ \{1+\eta_{n,1}(\nu,z)\}
\{1+\tilde{\eta}_{n}^{(-1)}(\nu,\xi)\}}
{1+\eta_{n,1}(\nu,\infty)}-1,
\end{equation}
%%%%%%%%%%%
and
%%%%%%%%%%%
\begin{equation}
\label{eq91}
\tilde{\epsilon}_{n,2}(\nu,z)
=\frac{\{ 1+\eta_{n,-1}(\nu,z)\}
\{ 1+\tilde{\eta}_{n}^{(1) }(\nu,\xi)\}}
{1+\eta_{n,-1}(\nu,\infty e^{\pi i})}-1.
\end{equation}
%%%%%%%%%%%

This leads to the following main result.

\begin{theorem}
\label{thm:AB-expansions}
The modified Bessel functions $I_{\pm i\nu}(\nu z)$, $K_{i\nu}(\nu z)$ and $L_{i\nu}(\nu z)$ are expressible in the forms (\ref{eq72}) - (\ref{eq75}), where $w_{j}(\nu,z)$ ($j=0,\pm 1, 2$) are defined by (\ref{eq59}) and (\ref{eq60}). In these $A(\nu,z)$ and $B(\nu,z)$ are analytic for all nonzero $z$ in the half-plane $|\arg(z)|\leq \frac{1}{2}\pi$, and possess the asymptotic expansions
%%%%%%%%%%%%%%%%%%
\begin{equation}
\label{eq92}
A(\nu,z) \sim 
\exp \left\{ \sum\limits_{s=1}^{\infty}
\frac{\tilde{\mathcal{E}}_{2s}(z) }{\nu^{2s}}\right\} 
\cosh \left\{ \sum\limits_{s=0}^{\infty}
\frac{\tilde{\mathcal{E}}_{2s+1}(z) }
{\nu^{2s+1}}\right\},
\end{equation}
%%%%%%%%%%%%%%%%%%
and
%%%%%%%%%%%%%%%%%%
\begin{equation}
\label{eq93}
B(\nu,z) \sim 
\frac{1}{\nu^{1/3} \zeta^{1/2}}
\exp \left\{ \sum\limits_{s=1}^{\infty}
\frac{\mathcal{E}_{2s}(z) }{\nu^{2s}}\right\} 
\sinh \left\{ \sum\limits_{s=0}^{\infty}
\frac{\mathcal{E}_{2s+1}(z) }
{\nu^{2s+1}}\right\},
\end{equation}
%%%%%%%%%%%%%%%%%%
as $\nu \to \infty$, uniformly for $|\arg(z)|\leq \frac{1}{2}\pi$ with an arbitrarily small compact neighbourhood of $z= 1$ being excluded. Here $\zeta$ is defined by (\ref{eq01}), and the coefficients $\mathcal{E}_{s}(z)$ are defined by (\ref{eq09}) - (\ref{eq11}), (\ref{eq65}), (\ref{eq80}) and (\ref{eq81}).
\end{theorem}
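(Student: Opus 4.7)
The plan is to derive the asymptotic expansions (\ref{eq92})--(\ref{eq93}) by direct substitution into the explicit representations for $A(\nu,z)$ and $B(\nu,z)$, and to handle analyticity by exhibiting the cancellation of the apparent branch singularity at the turning point. The key observation is that the punctured right half-plane is covered by the union $(Z_{-1}\cap Z_{1})\cup(Z_{0}\cap Z_{1})$, on which the representations (\ref{eq76})--(\ref{eq77}) and (\ref{eq78})--(\ref{eq79}) respectively apply; these two pairs define the same $A$ and $B$ by construction, via the connection formulas (\ref{eq70}) and (\ref{eq61}) that were used to derive them.

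The main computation takes place on $Z_{-1}\cap Z_{1}$. First I would substitute (\ref{eq23}) and (\ref{eq24}) for $I_{\pm i\nu}(\nu z)$, together with the Airy expansions (\ref{eq67}) and (\ref{eq69}) for $\mathrm{Ai}_{\pm 1}^{\prime}$, into (\ref{eq76}). The decisive cancellation is that $e^{\nu\xi}\cdot e^{-\nu\xi}=1$ in each of the two cross products: $I_{i\nu}\mathrm{Ai}_{-1}^{\prime}$ inherits the combined exponent $\tfrac12\pi\nu+\sum_{s}\tilde{\mathcal{E}}_{s}(z)/\nu^{s}$, and $I_{-i\nu}\mathrm{Ai}_{1}^{\prime}$ inherits $\tfrac12\pi\nu+\sum_{s}(-1)^{s}\tilde{\mathcal{E}}_{s}(z)/\nu^{s}$, where (\ref{eq81}) packages the Bessel coefficient $\mathrm{E}_{s}$ together with the Airy contribution $(-1)^{s}\tilde{a}_{s}/(s\xi^{s})$. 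A careful accounting of the algebraic prefactors $\nu^{\pm 1/3}$, $(z^{2}-1)^{\pm 1/4}$, $\zeta^{\pm 1/4}$, $e^{-\nu\pi/2}$, and the phases $e^{\pm\pi i/3},e^{\pm\pi i/6}$ shows they telescope to a common factor of $\tfrac12$, delivering (\ref{eq82}). Splitting the inner sum into its even- and odd-indexed parts then converts $e^{S}+e^{-S}$ into $2\cosh$, which after letting $n\to\infty$ gives (\ref{eq92}). An entirely parallel calculation for $B(\nu,z)$, now using $\mathrm{Ai}_{\pm 1}$ in place of their derivatives, replaces the cross-product sum by a difference (producing the $\sinh$) and absorbs the missing $\zeta^{1/4}$ factor into the prefactor $1/(\nu^{1/3}\zeta^{1/2})$ of (\ref{eq93}). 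A second pass on $Z_{0}\cap Z_{1}$, using (\ref{eq78})--(\ref{eq79}) with (\ref{eq22})--(\ref{eq23}) and the pair (\ref{eq63})--(\ref{eq64}),(\ref{eq66})--(\ref{eq67}), reproduces exactly the same expansions with the alternative error shells (\ref{eq84})--(\ref{eq87}), confirming that (\ref{eq92})--(\ref{eq93}) hold uniformly across the entire stated domain.

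For analyticity, each defining formula (\ref{eq76})--(\ref{eq79}) is a product of entire functions of $z$ with the ratio $((z^{2}-1)/\zeta)^{1/4}$; although formally this ratio involves fourth roots with branch points at $z=\pm 1$ and $\zeta=0$, by (\ref{eq01a}) it extends analytically through $z=1$ with value $2^{1/6}$ there, so $A,B$ are analytic at the turning point, and for other nonzero $z$ in $|\arg(z)|\leq\tfrac12\pi$ analyticity is manifest. The uniform asymptotic bound then follows because each $\epsilon_{n,j},\tilde{\epsilon}_{n,j}$ appearing in (\ref{eq84})--(\ref{eq91}) is $\mathcal{O}(\nu^{-n})$: the Bessel $\eta$-factors are controlled by (\ref{eq15}), while the Airy $\eta^{(l)},\tilde{\eta}^{(l)}$-factors are $\mathcal{O}((\nu\xi)^{-n})$ by \cite[Thm.~2.4]{Dunster:2021:SEB}, reducing to $\mathcal{O}(\nu^{-n})$ once $|\xi|$ is bounded below---which is precisely why a compact neighbourhood of $z=1$ must be excluded, and also why the $1-e^{-2\nu\pi}$ factors in (\ref{eq85}),(\ref{eq87}) are harmless. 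I expect the main obstacle to be the bookkeeping: verifying that the algebraic prefactors collapse exactly to $\tfrac12$ (and not to some other constant times a phase) so that the precise $\cosh$/$\sinh$ structure emerges, and confirming that the two representations yield identical coefficient series on their overlap---which, by uniqueness of asymptotic expansions of a single analytic function, they must.
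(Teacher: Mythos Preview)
Your approach is essentially the paper's, but there is a genuine gap in the coverage argument. You assert that $(Z_{-1}\cap Z_{1})\cup(Z_{0}\cap Z_{1})$ covers the entire punctured right half-plane $|\arg(z)|\leq\tfrac12\pi$, $z\neq 1$. As described in the paper, however, these regions are only shown to cover the \emph{first quadrant} $0\leq\arg(z)\leq\tfrac12\pi$ with $z\neq 1$: in particular $\alpha_{-1}$ is the point $z\to 0$ along the \emph{positive imaginary} axis, so $Z_{-1}$ lies in the upper half-plane and has no natural extension to the fourth quadrant. Moreover, the Airy expansions you invoke are stated for sectors such as $0\leq\arg(\xi)\leq\tfrac12\pi$ (for $\mathrm{Ai}_{1}$) and $\pi\leq\arg(\xi)\leq\tfrac32\pi$ (for $\mathrm{Ai}_{-1}$), which again correspond to the upper half of the $z$-plane. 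So your substitution argument, while correct, delivers (\ref{eq92})--(\ref{eq93}) only for $0\leq\arg(z)\leq\tfrac12\pi$.

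The missing step is the one the paper uses: $A(\nu,z)$ and $B(\nu,z)$ are real for $z\in(0,\infty)$, and so are all the coefficients $\mathcal{E}_{s}(z)$, $\tilde{\mathcal{E}}_{s}(z)$ appearing in the expansions. Hence the Schwarz reflection principle extends the asymptotic expansions from the first quadrant to the conjugate fourth quadrant, giving the full half-plane $|\arg(z)|\leq\tfrac12\pi$. You should add this reflection argument explicitly. (A minor point: in your analyticity paragraph, $I_{\pm i\nu}(\nu z)$ are not entire in $z$---they have a branch point at $z=0$---but they are analytic for nonzero $z$ in the right half-plane, which is all you need.)
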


\begin{proof}
The region $\{Z_{0} \cap Z_{1}\}\cup \{Z_{-} \cap Z_{1}\}$ includes the first quadrant $0 \leq \arg(z) \leq \frac12 \pi$ except the turning point $z=1$, and thus from (\ref{eq84}) - (\ref{eq91}) the  $\epsilon$ and $\tilde{\epsilon}$ error terms in (\ref{eq82}) and (\ref{eq83}) are $\mathcal{O}(\nu^{-n})$ as $\nu \to \infty$ in this unbounded region. The expansions (\ref{eq92}) and (\ref{eq93}) then follow from (\ref{eq82}) and (\ref{eq83}) for $0 \leq \arg(z) \leq \frac{1}{2}\pi$ ($z \neq 1$). Since both coefficient functions $A(\nu,z)$ and $B(\nu,z)$ are analytic and real on the positive real $z$ axis, as are the coefficients in the four asymptotic sums therein, Schwarz symmetry extends these expansions to the half-plane  $|\arg(z)|\leq \frac12 \pi$, ($z \neq 1$).
\end{proof}

Being analytic, $A(\nu,z)$ and $B(\nu,z)$ are bounded at $z=1$ even though the expansions break down there. Asymptotic approximation of  $A(\nu,z)$ and $B(\nu,z)$ near, or at, $z=1$ can be achieved in two ways, either using the above expansions in conjunction with Cauchy's integral formula, as described in \cite[Thm. 4.2]{Dunster:2021:SEB}, or by expanding them as regular expansions in inverse powers of $\nu$, which we describe next.

Now, similarly to \cite[Eqs. (4.9) and (4.20)]{Dunster:2024:AEG} let $\tilde{d}_{1}(z)=\tilde{\mathcal{E}}_{1}(z)$ and
%%%%%%%%%%%%%%%%%%
\begin{equation}
\label{eq94}
\tilde{d}_{s}(z)=\tilde{\mathcal{E}}_{s}(z)
+\frac{1}{s}\sum_{j=1}^{s-1}j
\tilde{\mathcal{E}}_{j}(z)\tilde{d}_{s-j}(z)
\quad (s=2,3,4,\ldots).
\end{equation}
%%%%%%%%%%%%%%%%%%
Then for $\mathrm{A}_{s}(z)=\tilde{d}_{2s}(z)$ ($s=1,2,3,\ldots$) one can show from (\ref{eq92}) that
%%%%%%%%%%%%%%%%%%
\begin{equation}
\label{eq95}
A(\nu,z) \sim 
1+\sum\limits_{s=1}^{\infty}
\frac{\mathrm{A}_{s}(z) }{\nu^{2s}}
\quad (\nu \to \infty).
\end{equation}
%%%%%%%%%%%%%%%%%%

Similarly, let $d_{1}(z)=\mathcal{E}_{1}(z)$ and
%%%%%%%%%%%%%%%%%%
\begin{equation}
\label{eq96}
d_{s}(z)=\mathcal{E}_{s}(z)
+\frac{1}{s}\sum_{j=1}^{s-1}j
\mathcal{E}_{j}(z)d_{s-j}(z)
\quad (s=2,3,4,\ldots).
\end{equation}
%%%%%%%%%%%%%%%%%%
Then for $\mathrm{B}_{s}(z)=\zeta^{-1/2}d_{2s+1}(z)$ ($s=0,1,2,\ldots$)
%%%%%%%%%%%%%%%%%%
\begin{equation}
\label{eq97}
B(\nu,z) \sim \frac{1}{\nu^{4/3}}
\sum\limits_{s=0}^{\infty}
\frac{\mathrm{B}_{s}(z)}{\nu^{2s}}
\quad (\nu \to \infty).
\end{equation}
%%%%%%%%%%%%%%%%%%

From \cite[Thm. 3.1]{Dunster:2021:NKF} the coefficients $\mathrm{A}_{s}(z)$ and $\mathrm{B}_{s}(z)$ have a removable singularity at $z=1$, and both expansions are valid in a neighbourhood of $z=1$ (in fact throughout the half-plane $|\arg(z)|\leq \frac12 \pi$). The coefficients can be readily computed either directly if not too close to the turning point, or by a Taylor series otherwise.

The first few coefficients of both are found to be given by
%%%%%%%%%%%%%%%%%%
\begin{equation}
\label{eq98}
\mathrm{A}_{1}(z) 
=\frac12\left\{
\tilde{\mathcal{E}}_{1}^{2}(z)
+2\tilde{\mathcal{E}}_{2}(z)
\right\},
\end{equation}
%%%%%%%%%%%%%%%%%%
%%%%%%%%%%%%%%%%%%
\begin{equation}
\label{eq99}
\mathrm{A}_{2}(z) 
=\frac{1}{24}\left\{
\tilde{\mathcal{E}}_{1}^{4}(z)
+12\tilde{\mathcal{E}}_{1}^{2}(z)
\tilde{\mathcal{E}}_{2}(z)
+24\tilde{\mathcal{E}}_{1}(z)\tilde{\mathcal{E}}_{3}(z)
+12\tilde{\mathcal{E}}_{2}^{2}(z)
+24\tilde{\mathcal{E}}_{4}(z)
\right\},
\end{equation}
%%%%%%%%%%%%%%%%%%
%%%%%%%%%%%%%%%%%%
\begin{multline}  
\label{eq100}
\mathrm{A}_{3}(z) 
=\frac{1}{720}\left\{
\tilde{\mathcal{E}}_{1}^6(z)
+ 30 \tilde{\mathcal{E}}_{1}^4(z) \tilde{\mathcal{E}}_{2}(z)
+ 120 \tilde{\mathcal{E}}_{1}^3(z) \tilde{\mathcal{E}}_{3}(z)
+ 180\left(\tilde{\mathcal{E}}_{2}^2(z)
+ 2 \tilde{\mathcal{E}}_{4}(z)\right) \right.
\\
\tilde{\mathcal{E}}_{1}^2(z) 
+ 720\tilde{\mathcal{E}}_{1}(z)\left(\tilde{\mathcal{E}}_{2}(z)
\tilde{\mathcal{E}}_{3}(z)
+ \tilde{\mathcal{E}}_{5}(z)\right)
+ 120 \tilde{\mathcal{E}}_{2}^3(z)
\\
\left.
+ 720\tilde{\mathcal{E}}_{2}\tilde{\mathcal{E}}_{4}(z)
+ 360 \tilde{\mathcal{E}}_{3}^2(z)
+ 720 \tilde{\mathcal{E}}_{6}(z)
\right\},
\end{multline}
%%%%%%%%%%%%%%%%%%
%%%%%%%%%%%%%%%%%%
\begin{equation}
\label{eq101}
\mathrm{B}_{0}(z) 
=\frac{\mathcal{E}_{1}(z)}{\zeta^{1/2}},
\end{equation}
%%%%%%%%%%%%%%%%%%
%%%%%%%%%%%%%%%%%%
\begin{equation}
\label{eq102}
\mathrm{B}_{1}(z) 
=\frac{1}{6\zeta^{1/2}}\left\{
\mathcal{E}_{1}^{3}(z)
+6 \mathcal{E}_{1}(z) \mathcal{E}_{2}(z)
+6 \mathcal{E}_{3}(z)
\right\},
\end{equation}
%%%%%%%%%%%%%%%%%%
%%%%%%%%%%%%%%%%%%
\begin{multline}  
\label{eq103a}
\mathrm{B}_{2}(z) 
=\frac{1}{120 \zeta^{1/2}}\left\{
\mathcal{E}_{1}^{5}(z)
+ 20 \mathcal{E}_{1}^{3}(z) \mathcal{E}_{2}(z) 
+ 60 \mathcal{E}_{1}^{2}(z) \mathcal{E}_{3}(z) 
 \right.
\\
\left.
+ 60  \mathcal{E}_{1}(z) \left(\mathcal{E}_{2}^{2}(z)
+ 2 \mathcal{E}_{4}(z) \right)
+ 120 \mathcal{E}_{2}(z) \mathcal{E}_{3}(z) 
+ 120 \mathcal{E}_{5}(z) 
\right\},
\end{multline}
%%%%%%%%%%%%%%%%%%
and
%%%%%%%%%%%%%%%%%%
\begin{multline}  
\label{eq103}
\mathrm{B}_{3}(z) 
=\frac{1}{5040 \zeta^{1/2}}\left\{
\mathcal{E}_{1}^{7}(z)
+ 42 \mathcal{E}_{1}^{5}(z) \mathcal{E}_{2}(z) 
+210 \mathcal{E}_{1}^{4}(z) \mathcal{E}_{3}(z) 
 \right.
\\
+420\mathcal{E}_{1}^{3}(z) 
\left(\mathcal{E}_{2}^{2}(z)+2\mathcal{E}_{4}(z)\right)
+2520\mathcal{E}_{1}^{2}(z)\left(\mathcal{E}_{2}(z)
\mathcal{E}_{3}(z) 
+ \mathcal{E}_{5}(z)\right) 
\\
+ 840 \mathcal{E}_{1}(z)\left(\mathcal{E}_{2}^{3}(z) 
+ 6\mathcal{E}_{2}(z)\mathcal{E}_{4}(z) 
+ 3\mathcal{E}_{3}^{2}(z) + 6\mathcal{E}_{6}(z)\right)
\\
\left.
+ 2520\mathcal{E}_{2}^{2}(z)\mathcal{E}_{3}(z) 
+ 5040\mathcal{E}_{2}(z)\mathcal{E}_{5}(z) 
+ 5040\mathcal{E}_{3}(z)\mathcal{E}_{4}(z) 
+ 5040\mathcal{E}_{7}(z)
\right\}.
\end{multline}
%%%%%%%%%%%%%%%%%%

As $z \to 1$ we note that
%%%%%%%%%%%%%%%%%%
\begin{multline}
\label{eq104}
\mathrm{A}_{1}(z) 
=\tfrac{1}{225}
+\mathcal{O}(z-1),
\, \mathrm{A}_{2}(z) 
=\tfrac{151439}{218295000}
+\mathcal{O}(z-1),
\\ \mathrm{A}_{3}(z) 
=\tfrac{1374085664813273149}{3633280647121125000000}
+\mathcal{O}(z-1),
\end{multline}
%%%%%%%%%%%%%%%%%%
and
%%%%%%%%%%%%%%%%%%
\begin{multline}
\label{eq107}
\mathrm{B}_{0}(z) 
=\tfrac{1}{70} \,2^{1/3}
+\mathcal{O}(z-1),
\, \mathrm{B}_{1}(z) 
=\tfrac{1213}{1023750} \,2^{1/3}
+\mathcal{O}(z-1),
\\
\mathrm{B}_{2}(z) 
=\tfrac{9597171184603}
{25476663712500000} \,2^{1/3}
+\mathcal{O}(z-1),
\\
\mathrm{B}_{3}(z) 
=\tfrac{53299328587804322691259}
{91182706744837207500000000} \,2^{1/3}
+\mathcal{O}(z-1),
\end{multline}
%%%%%%%%%%%%%%%%%%
in accord with them having a removable singularity at $z=1$.

Let us numerically check the accuracy of (\ref{eq74}) for $z=x>0$. In this we use the expansions (\ref{eq95}) and (\ref{eq97}) and compute the coefficients via Taylor series for $|x-1|<0.1$. For $x\geq 1.1$ we compute the coefficients directly from (\ref{eq98}) - (\ref{eq103}). For $0<x\leq 0.9$ these expressions remain valid, but care must be taken since $\beta$ and $\xi$ are imaginary (whereas the coefficients $\mathrm{A}_{s}(x)$ and $\mathrm{B}_{s}(x)$ are real). In this instance we find it convenient to use (\ref{eq01}) and (\ref{eq06}) to express $\xi$ in the form
%%%%%%%%%%%%%%%%%%%
\begin{equation}
\label{eq108}
\xi=\frac{1-\tau}{\beta},
\end{equation}
%%%%%%%%%%%%%%%%%%%
where
%%%%%%%%%%%%%%%%%%%
\begin{equation}
\label{eq109}
\tau=\tau(x)=\frac{\ln\left\{\left(1-x^2\right)^{1/2}
+1\right\}-\ln(x)}{\left(1-x^2\right)^{1/2}}>0
\quad (0<x< 1).
\end{equation}
%%%%%%%%%%%%%%%%%%%

Furthermore, from (\ref{eq01}) and (\ref{eq109}),
%%%%%%%%%%%%%%%%%%%
\begin{equation}
\label{eq110}
\frac{1}{\zeta^{1/2}}=\frac{2\zeta}{3\xi}
=-\frac{2\beta \zeta}{3(\tau-1)},
\end{equation}
%%%%%%%%%%%%%%%%%%%
which we insert into our expressions for $\mathrm{B}_{s}(x)$. As a result, both sets of coefficients involve the real-valued functions $\tau$ and $\zeta$, along with only even powers of $\beta$, and since $\beta^2=(x^2-1)^{-1}$ is negative these too are real. For example, we find that
%%%%%%%%%%%%%%%%%%%
\begin{equation}
\label{eq111a}
\mathrm{A}_{1}(x)
=-\frac{385}{1152\left(1-x^{2}\right)^{3}}
+\frac{7(99\tau-104)}{1728\left(1-x^{2}\right)^{2}(\tau-1)}
-\frac{729\tau^2 - 1584\tau + 400}
{10368\left(1-x^{2}\right)(\tau-1)^{2}},
\end{equation}
%%%%%%%%%%%%%%%%%%%
and
%%%%%%%%%%%%%%%%%%%
\begin{equation}
\label{eq111b}
\mathrm{B}_{0}(x)
=-\frac{5\zeta}{36\left(1-x^{2}\right)^{2}
(\tau-1)}+\frac{(9\tau-4)\zeta}
{108\left(1-x^{2}\right)(\tau-1)^{2}},
\end{equation}
%%%%%%%%%%%%%%%%%%%
which we use (and similarly for the subsequent coefficients) when $0<x \leq 0.9$.

Typically, the relative error of an approximation $\hat{f}$ of a non-vanishing function $f$ is made by computing $|(f-\hat{f})/f
|$. Naturally, this is not feasible in regions where the function has zeros. Thus, in order to accurately estimate the relative error, we define a so-called envelope function for $K_{i\nu}(\nu x)$ by
%%%%%%%%%%%%%
\begin{equation}
\label{eq111c}
N(\nu,x)
=
    \begin{cases}
        \left\{K_{i\nu}^{2}(\nu x)+L_{i\nu}^{2}(\nu x)\right\}^{1/2} & (0<x \leq l_{\nu,1})\\
        K_{i\nu}(\nu x) & (l_{\nu,1} < x < \infty)
    \end{cases},
\end{equation}
%%%%%%%%%%%%%
where $l_{\nu,1}$ is the largest positive zero of $L_{i\nu}(\nu x)$. In the oscillatory interval $(0,l_{\nu,1})$ this positive function is close to the amplitude of $K_{i\nu}(\nu x)$, whilst of course being identically equal to the function in the non-oscillatory interval. For example, in \cref{fig:envK} the function $K_{i \nu}(\nu x)$ (solid curve) and its envelope $N(\nu,x)$ (dashed curve) are shown for $\nu=10$.

Now, if we take four terms in each of the series (\ref{eq95}) and (\ref{eq97}), and refer to (\ref{eq59}) and (\ref{eq74}), an appropriately scaled error function for that approximation is given by
%%%%%%%%%%%%%
\begin{equation}
\label{eq111d}
\chi(\nu,x)=\frac{\left|K_{i\nu}(\nu x)-S(\nu,x)\right|}
{N(\nu,x)},
\end{equation}
%%%%%%%%%%%%%
where
%%%%%%%%%%%%%
\begin{multline}
\label{eq111e}
S(\nu,x)
=\frac{ \pi e^{\nu \pi/2}}
{2^{1/2}\nu^{1/3}\sinh(\nu \pi)}
\left(\frac{\zeta}{x^2-1}\right)^{1/4} 
\left[\mathrm{Ai}\left(\nu^{2/3}\zeta\right)
\left\{1+\sum\limits_{s=1}^{3}
\frac{\mathrm{A}_{s}(x) }{\nu^{2s}}\right\}
\right.
\\ \left.
+\frac{\mathrm{Ai}'\left(\nu^{2/3}\zeta \right)}
{\nu^{4/3}}\sum\limits_{s=0}^{3}
\frac{\mathrm{B}_{s}(x)}{\nu^{2s}} \right].
\end{multline}
%%%%%%%%%%%%%

A graph of 
%%%%%%%%%%%%%
\begin{equation}
\label{eq111f}
\Omega_{3}(\nu,x) = \log_{10}\left\{\chi(\nu,x)\right\}
\end{equation}
%%%%%%%%%%%%%
is given in \cref{fig:AiryK} for $\nu=10$, and we observe good accuracy uniformly for all positive $x$.

\begin{figure}
 \centering
 \includegraphics[
 width=0.7\textwidth,keepaspectratio]{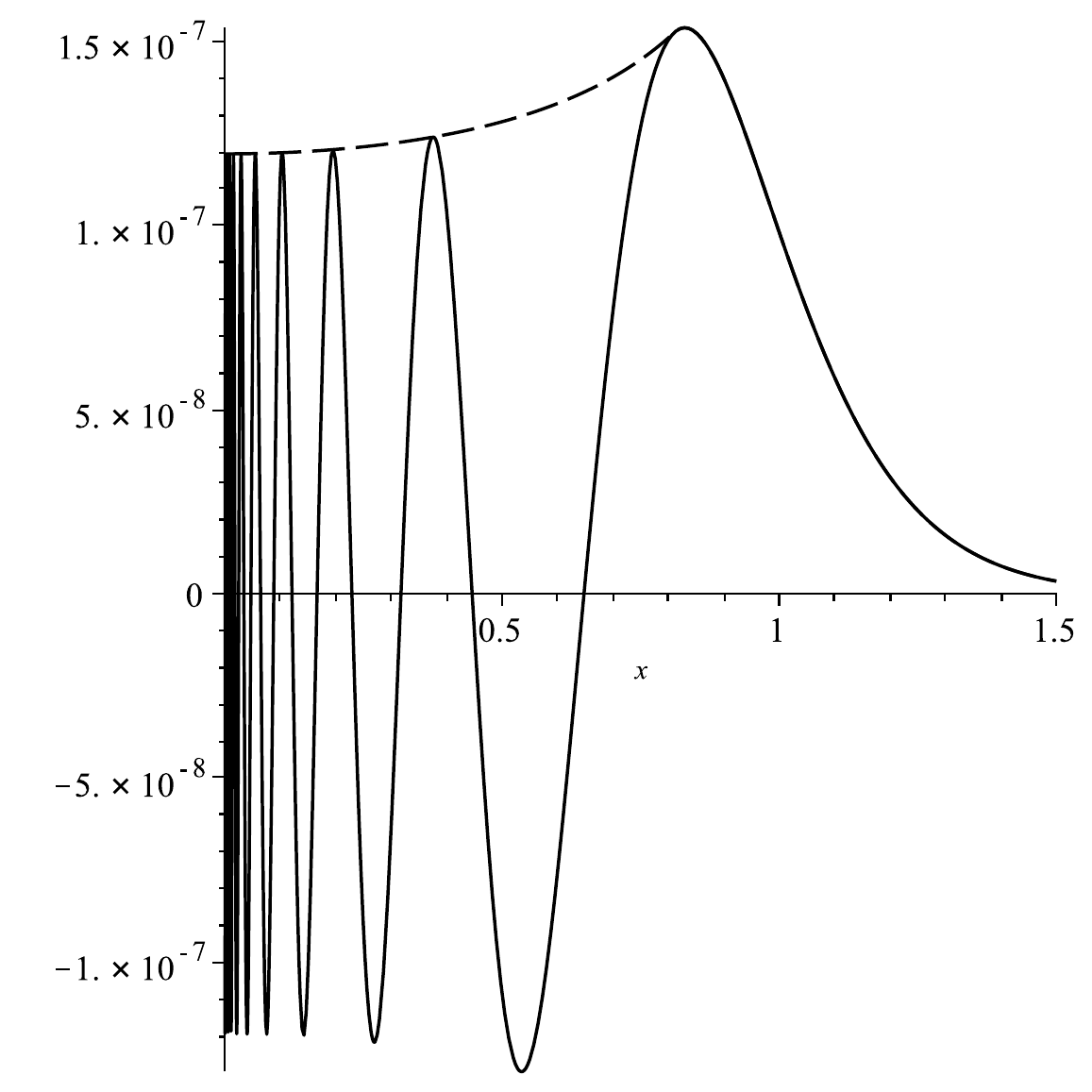}
 \caption{Graphs of $K_{10i}(10x)$ (solid curve) and $N(10,x)$ (dashed curve).}
 \label{fig:envK}
\end{figure}

\begin{figure}
 \centering
 \includegraphics[
 width=0.7\textwidth,keepaspectratio]{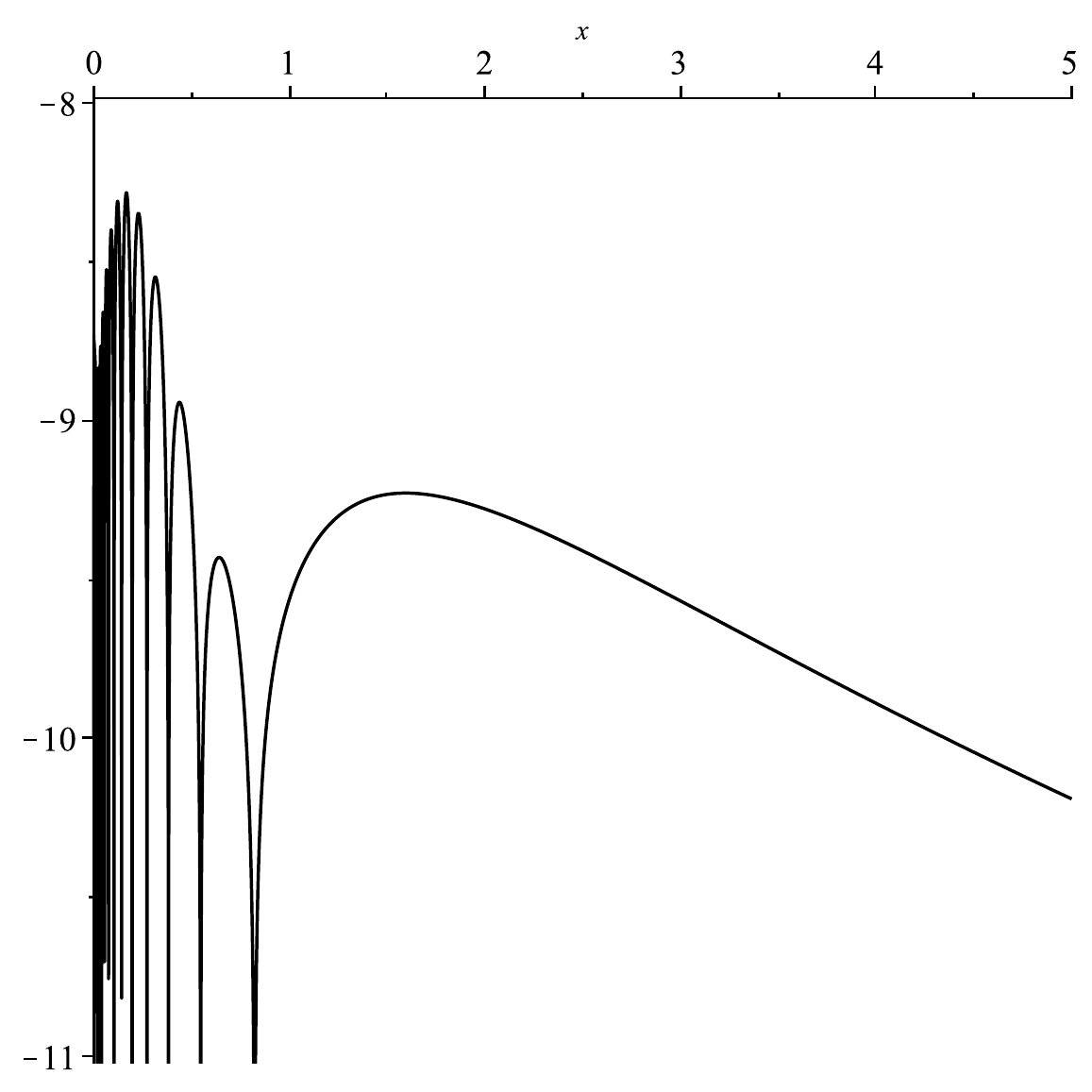}
 \caption{Graph of $\Omega_{3}(10,x)$.}
 \label{fig:AiryK}
\end{figure}

\section{Positive zeros of \texorpdfstring{$K_{i \nu}(t)$}{} and related modified Bessel functions}
\label{sec:Kzeros}

We use the method of \cite{Dunster:2024:AZB} where uniform asymptotic expansions were derived for the zeros of (non modified) Bessel functions of large positive order. Although the expansions we derive here are very similar and equally powerful, the fundamental difference is that the positive zeros of modified Bessel functions of imaginary order are bounded and approach zero in a limit, as opposed to Bessel functions of real order where they are unbounded.

Now, similarly to \cite[Eq. (3.9)]{Dunster:2024:AZB}, for $\nu >0$ define functions $\mathcal{Y}(\nu,z)$ and $\mathcal{Z}(\nu,z)$ via the pair of equations
%%%%%%%%%%%%%%%%%%%
\begin{equation}
\label{eq112}
I_{\pm i\nu}(\nu z)
= e^{\mp \pi i/6}
\mathcal{Y}(\nu,z)
\mathrm{Ai}_{\pm 1}\left( \nu^{2/3}
\mathcal{Z}(\nu,z)\right).
\end{equation}
%%%%%%%%%%%%%%%%%%%
Compare these to (\ref{eq59}), (\ref{eq72}) and (\ref{eq73}). From the Wronskians \cite[Eqs. 9.2.9 and 10.28.1]{NIST:DLMF} one can show that
%%%%%%%%%%%%%%%%%%%
\begin{equation}
\label{eq113}
\mathcal{Y}(u,z)
=\frac{2}{\nu^{1/3}}
\left\{\frac{\sinh(\nu \pi)}
{z \,\partial \mathcal{Z}(u,z)/\partial z}\right\}^{1/2},
\end{equation}
%%%%%%%%%%%%%%%%%%%
although we do not require this function.

The reason we use the forms (\ref{eq112}) is that with suitable information on $\mathcal{Z}(\nu,z)$ they are most practicable for constructing asymptotic expansions for the zeros of the Bessel functions in question. In short, the Bessel zeros occur \emph{exactly} when $\nu^{2/3}\mathcal{Z}(\nu,z)$ equals a zero of the corresponding Airy function in the expression, and these latter zeros are well known and readily computable (primarily because Airy functions only depend on one variable, as opposed to Bessel functions which have an addition parameter).

Now from (\ref{eq61}), (\ref{eq70}) and (\ref{eq112}) we have
%%%%%%%%%%%%%%%%%%%
\begin{equation}
\label{eq114}
\frac{2 \sinh(\nu \pi)}{\pi} K_{i\nu}(\nu z)
= \mathcal{Y}(\nu,z)
\mathrm{Ai}\left( \nu^{2/3}
\mathcal{Z}(\nu,z)\right),
\end{equation}
%%%%%%%%%%%%%%%%%%%
and this is the relation that we primarily focus on (asymptotic expansions for zeros of the other Bessel functions can similarly be obtained, and we will also consider those of $L_{i\nu}(\nu z)$).

We now apply \cite[Thm. 2.2]{Dunster:2024:AZB} to obtain
%%%%%%%%%%%%%%%%%%%
\begin{equation}
\label{eq115}
\mathcal{Z}(\nu,z) \sim \zeta
+\sum_{s=1}^{\infty}
\frac{\Upsilon_{s}(z)}{\nu^{2s}}
\quad (\nu \to \infty),
\end{equation}
%%%%%%%%%%%%%%%%%%%
where $\Upsilon_{s}(z)$ are analytic at the turning point $z=1$. This expansion is uniformly valid in the same region as the expansions (\ref{eq95}) and (\ref{eq97}), which includes the half-plane $|\arg(z)|\leq \frac{1}{2}\pi$. In particular, it is uniformly valid for $z \in (0,1)$ where the zeros are located.

The coefficients $\Upsilon_{s}(z)$ follow from \cite[Thm. 2.2]{Dunster:2024:AZB}, with the leading one given by Eq. (2.28) of that reference, namely
%%%%%%%%%%%%%%%%%%%
\begin{equation}
\label{eq116}
\Upsilon_{1}(z)=
\frac{3 \xi \mathrm{E}_{1}(\beta)}
{2\zeta^{2}}
-\frac{5}{48\zeta^{2}},
\end{equation}
%%%%%%%%%%%%%%%%%%%
where in the present notation we have replaced $\hat{E}_{1}(z)$ by $\mathrm{E}_{1}(\beta)$ (see (\ref{eq09})).

The subsequent coefficients are given recursively as described in the referenced theorem. The next three are explicitly given by \cite[Eqs. (2.40) - (2.42)]{Dunster:2024:AZB}, again with $\hat{E}_{s}(z)$ replaced by $\mathrm{E}_{s}(\beta)$ ($s=3,5,7$). Here they differ in that $\xi$, $\zeta$ and $\mathrm{E}_{s}(\beta)$ are not the same as in that paper, although the present ones are closely related to the corresponding functions in that reference.

At this stage it is convenient to work with the variable defined by
%%%%%%%%%%%%%%%%%%%
\begin{equation}
\label{eq117}
\sigma(z)=\zeta^{1/2}\beta
=\left(\frac{\zeta}{z^{2}-1}\right)^{1/2},
\end{equation}
%%%%%%%%%%%%%%%%%%%
which is analytic at $z=1$; indeed from (\ref{eq01a}) one obtains
%%%%%%%%%%%%%%%%%%%
\begin{equation}
\label{eq118}
\sigma(z)
=\frac{1}{2^{1/3}}
+\frac{{2}^{2/3}}{5}(z-1)
+\mathcal{O}\left\{(z-1)^2\right\}
\quad (z \to 1).
\end{equation}
%%%%%%%%%%%%%%%%%%%
Now from (\ref{eq09}), (\ref{eq116}) and (\ref{eq117})
%%%%%%%%%%%%%%%%%%%
\begin{equation}
\label{eq119}
\Upsilon_{1}(z)
=\frac{1}{ 48\zeta^{2}}
\left\{10\sigma^{3}+6\sigma\zeta-5
\right\},
\end{equation}
%%%%%%%%%%%%%%%%%%%
and from this we find from (\ref{eq01a})
%%%%%%%%%%%%%%%%%%%
\begin{equation}
\label{eq120}
\Upsilon_{1}(z)=
\tfrac{1}{70} 2^{1/3}
+\mathcal{O}(z-1)
\quad (z \to 1),
\end{equation}
%%%%%%%%%%%%%%%%%%%
illustrating that it (like all the other coefficients) is analytic at $z=1$.

Next, from (\ref{eq114}) we see that the positive zeros of $K_{i\nu}(t)$, which we denote by $k_{\nu,m}$ ($m=1,2,3,\ldots$), satisfy the implicit equation
%%%%%%%%%%%%%%%%%%%
\begin{equation}
\label{eq121}
\mathcal{Z}(\nu,\nu^{-1}k_{\nu,m})
=\nu^{-2/3}\mathrm{a}_{m}
\quad (m=1,2,3,\ldots),
\end{equation}
%%%%%%%%%%%%%%%%%%%
where $x=\mathrm{a}_{m}$ is the $m$th negative zero of $\mathrm{Ai}(x)$. Note that $k_{\nu,1}>k_{\nu,2}>k_{\nu,3}> \ldots$ and $k_{\nu,m} \to 0$ as $m \to \infty$.

Following the same steps leading to \cite[Thm. 3.1]{Dunster:2024:AZB} we obtain a uniform asymptotic expansion of the form
%%%%%%%%%%%%%%%%%%%%%%%%%
\begin{equation}
\label{eq122}
k_{\nu,m} \sim
\nu \sum_{s=0}^{\infty}
\frac{\kappa_{m,s}}{\nu^{2s}}
\quad (\nu \to \infty, \, m=1,2,3,\ldots).
\end{equation}
%%%%%%%%%%%%%%%%%%%%%%%%%
Plugging this into (\ref{eq121}), using (\ref{eq03}) and (\ref{eq115}), re-expanding in inverse powers of $\nu$, and then equating like powers, we deduce for each $m$ the leading term $\kappa_{m,0} \in (0,1)$ is given implicitly by
%%%%%%%%%%%%%%%%
\begin{equation}
\label{eq123}
\ln\left\{
\frac{1+(1-\kappa_{m,0}^{2})^{1/2}}{\kappa_{m,0}}
\right\}-\left(1-\kappa_{m,0}^{2}\right)^{1/2}
=\frac{2}{3\nu}|\mathrm{a}_{m}|^{3/2}.
\end{equation}
%%%%%%%%%%%%%%%%%%
Compare this to the leading term $z_{m,0}$ given by \cite[Eq. (3.31)]{Dunster:2024:AZB}, which unlike $\kappa_{m,0}$ is unbounded and lies in $(1,\infty)$.

From \cite[Eq. 9.9.6]{NIST:DLMF} $|\mathrm{a}_{m}|^{3/2} \sim \frac32 m\pi$ as $m \to \infty$. Thus from (\ref{eq123}) we see that $\kappa_{m,0} \to 0$ as $m/\nu \to \infty$ such that
%%%%%%%%%%%%%%%%
\begin{equation}
\label{eq124}
\kappa_{m,0}=\frac{2}{e U}
+\mathcal{O}\left(\frac{1}{U^3}\right),
\end{equation}
%%%%%%%%%%%%%%%%%%
where
%%%%%%%%%%%%%%%%
\begin{equation}
\label{eq125}
U=\exp\left\{
\frac{2}{3\nu}|\mathrm{a}_{m}|^{3/2}\right\} \to \infty.
\end{equation}
%%%%%%%%%%%%%%%%%%
Also observe that $\kappa_{m,0} \to 1$ as $m/\nu \to 0$.

The subsequent coefficients in (\ref{eq122}) are precisely those given by \cite[Thm. 3.1]{Dunster:2024:AZB}, except that one replaces $z_{m,s}$ by $\kappa_{m,s}$ (and of course our variables $\zeta$ and $\Upsilon_{s}(z)$ differ). In these expressions we compute all $z$ derivatives via (see (\ref{eq03}) and (\ref{eq117}))
%%%%%%%%%%%%%%%%%%%
\begin{equation}
\label{eq126}
\zeta'=\frac{1}{z\sigma},
\; \sigma'=\frac{1-2z^{2} \sigma^{3}}
{ 2z \zeta},
\end{equation}
%%%%%%%%%%%%%%%%%%%
with higher derivatives obtained by repeated use of the chain rule.

The coefficients are then found to be given explicitly of the form $\kappa_{m,s}=\hat{\kappa}_{s}(\kappa_{m,0})$, where $\hat{\kappa}_{s}(z)$ are polynomials in $z$, $\sigma(z)$ and $1/\zeta(z)$, each having a removable singularity at the turning point $z=1$ ($\zeta=0$). The leading term of course is $\hat{\kappa}_{0}(z)=z$, with the next two\footnote{These two, along with $\hat{\kappa}_{3}(z)$ and $\hat{\kappa}_{4}(z)$, are available in Maple input format at the GitHub link \url{https://github.com/tmdunster/Bessel-Imaginary-Order}} being given by
%%%%%%%%%%%%%%%%%%%
\begin{equation}
\label{eq128}
\hat{\kappa}_{1}(z)=
\frac{z \sigma}{48  \zeta^{2}}
\left\{ 5- 10\sigma^3- 6\sigma \zeta \right\},
\end{equation}
%%%%%%%%%%%%%%%%%%%
and
%%%%%%%%%%%%%%%%%%%
\begin{multline}
\label{eq128a}
\hat{\kappa}_{2}(z)=
-\frac{z \sigma}{46080
\zeta^{5}}
\left\{200\sigma^{9}\left( 35z^{2}
+221 \right) 
+80\sigma^{7}  \zeta\left( 75z^{2}+982\right) 
\right.
\\
-4000\sigma^{6}  z^{2}
+24\sigma^{5}  \zeta^{2}\left( 45 z^{2}+1543\right)
-200\sigma^{4}  \zeta
\left(6 z^{2}-5 \right) 
 \\
 \left.
+600\sigma^{2}  \zeta^{2}
+10 \sigma^{3}\left(25 z^{2} 
+264 \zeta^{3}\right) 
-250  \sigma \zeta-5525
\right\}.
\end{multline}
%%%%%%%%%%%%%%%%%%%
Compare these to \cite[Eqs. (3.56) and (3.57)]{Dunster:2024:AZB}.

As $z \to 1$ we find from (\ref{eq03}), (\ref{eq117}), (\ref{eq128}) and (\ref{eq128a}) that $\hat{\kappa}_{1}(z)=-\frac{1}{70}+\mathcal{O}(z-1)$ and $\hat{\kappa}_{2}(z)=-\frac{3781}{3185000}+\mathcal{O}(z-1)$. As $z \to 0^{+}$ it can also be shown that $\hat{\kappa}_{1}(z)=-\frac{1}{12}z+\mathcal{O}\{z/\ln(z)\}$ and $\hat{\kappa}_{2}(z)=\frac{1}{1440}z+\mathcal{O}\{z/\ln(z)\}$.

The graphs of $\hat{\kappa}_{s}(z)/z$ for $s=1,2,3,4$ are shown in \cref{fig:kappa1,fig:kappa2-4} for $s=1$ and $s=2,3,4$ respectively, illustrating their relative magnitudes to the leading term $\hat{\kappa}_{0}(z)=z$.

\begin{figure}
 \centering
 \includegraphics[
 width=0.7\textwidth,keepaspectratio]{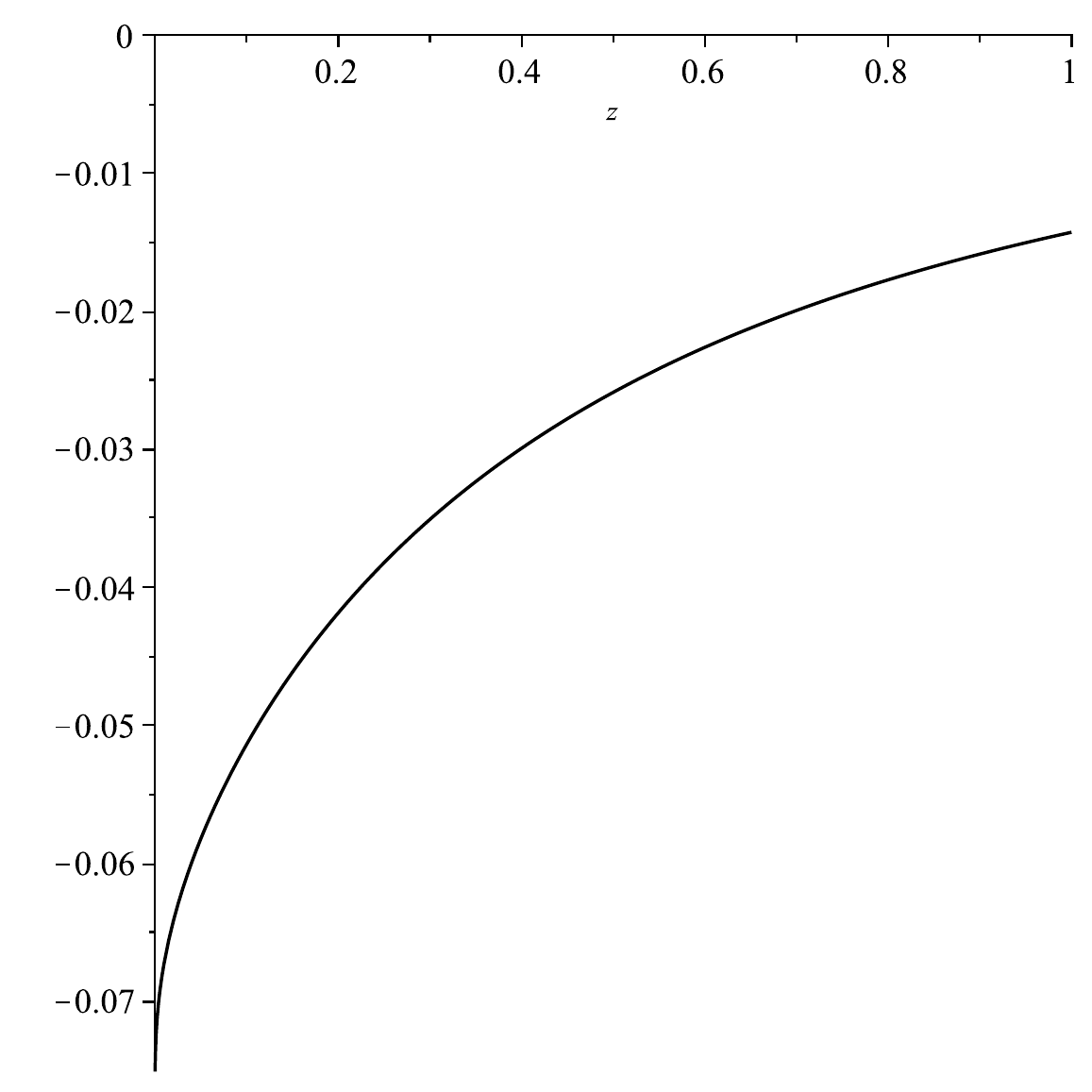}
 \caption{Graph of $\hat{\kappa}_{1}(z)/z$ for $0 \leq z \leq 1$.}
 \label{fig:kappa1}
\end{figure}

\begin{figure}
 \centering
 \includegraphics[
 width=0.7\textwidth,keepaspectratio]{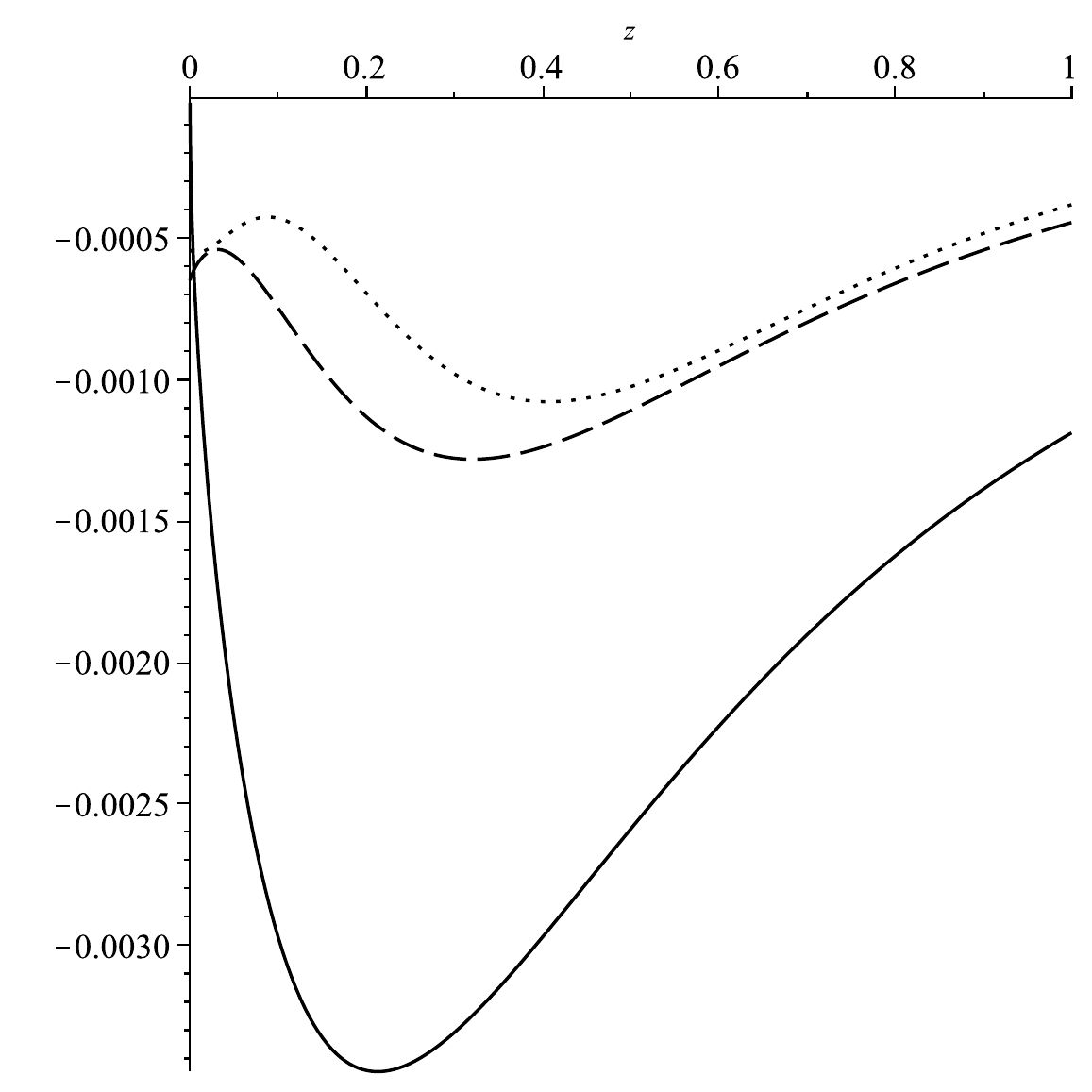}
 \caption{Graphs of $\hat{\kappa}_{s}(z)/z$ for $s=2$ (solid), $s=3$ (dashed) and $s=4$ (dotted) for $0 \leq z \leq 1$.}
 \label{fig:kappa2-4}
\end{figure}

Asymptotic expansions for the positive zeros of the companion function $L_{i\nu}(t)$ (as defined by (\ref{eq71})), which we shall label by $l_{\nu,m}$, come in a similar manner using
%%%%%%%%%%%%%%%%%%%
\begin{equation}
\label{eq114a}
\frac{2 \sinh(\nu \pi)}{\pi} L_{i\nu}(\nu z)
= \mathcal{Y}(\nu,z)
\mathrm{Bi}\left( \nu^{2/3}
\mathcal{Z}(\nu,z)\right),
\end{equation}
%%%%%%%%%%%%%%%%%%%
which follows from (\ref{eq62}) and (\ref{eq112}). From this we find that they satisfy the similar expansions as (\ref{eq122}), namely
%%%%%%%%%%%%%%%%%%%%%%%%%
\begin{equation}
\label{eq122a}
l_{\nu,m} \sim
\nu \sum_{s=0}^{\infty}
\frac{\varrho_{m,s}}{\nu^{2s}}
\quad (\nu \to \infty, \, m=1,2,3,\ldots).
\end{equation}
%%%%%%%%%%%%%%%%%%%%%%%%%
We have $\varrho_{m,j}=\hat{\kappa}_{j}(\varrho_{m,0})$ with the same functions $\hat{\kappa}_{j}(z)$ as above, but in place of (\ref{eq123}) $\varrho_{m,0}$ satisfies
%%%%%%%%%%%%%%%%
\begin{equation}
\label{eq123a}
\ln\left\{
\frac{1+(1-\varrho_{m,0}^{2})^{1/2}}{\varrho_{m,0}}
\right\}-\left(1-\varrho_{m,0}^{2}\right)^{1/2}
=\frac{2}{3\nu}|\mathrm{b}_{m}|^{3/2},
\end{equation}
%%%%%%%%%%%%%%%%%%
where $\mathrm{b}_{m}$ is the $m$th negative zero of $\mathrm{Bi}(x)$. 

\begin{figure}
 \centering
 \includegraphics[
 width=0.7\textwidth,keepaspectratio]{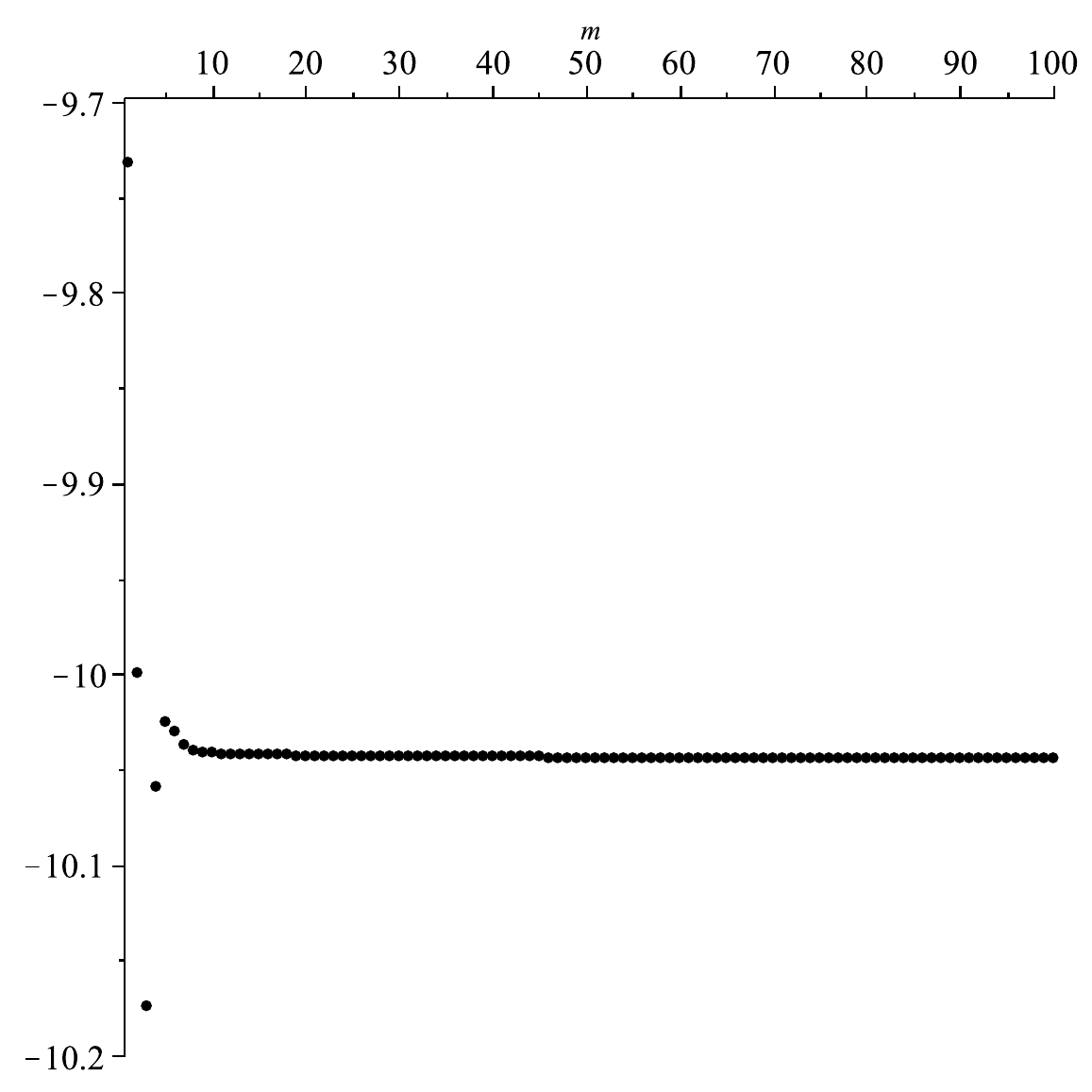}
 \caption{Plots of $\log_{10}|\Delta_{5}(k_{4}(5,m))|$ for $m=1,2,3,\ldots 100$.}
 \label{fig:Airy5zeros}
\end{figure}

\begin{figure}
 \centering
 \includegraphics[
 width=0.7\textwidth,keepaspectratio]{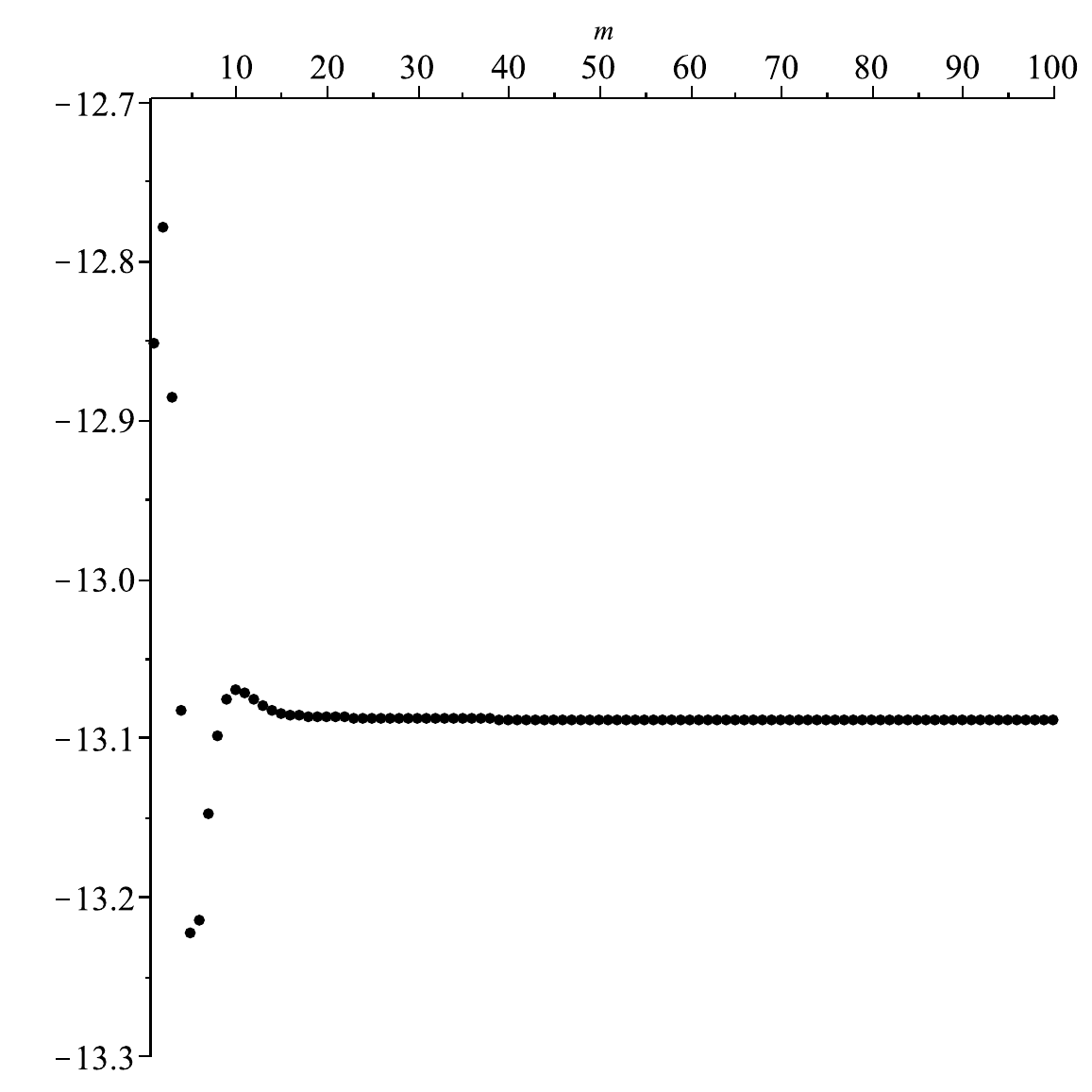}
 \caption{Plots of $\log_{10}|\Delta_{10}(k_{4}(10,m))|$ for $m=1,2,3,\ldots 100$.}
 \label{fig:Airy10zeros}
\end{figure}

\begin{figure}
 \centering
 \includegraphics[
 width=0.7\textwidth,keepaspectratio]{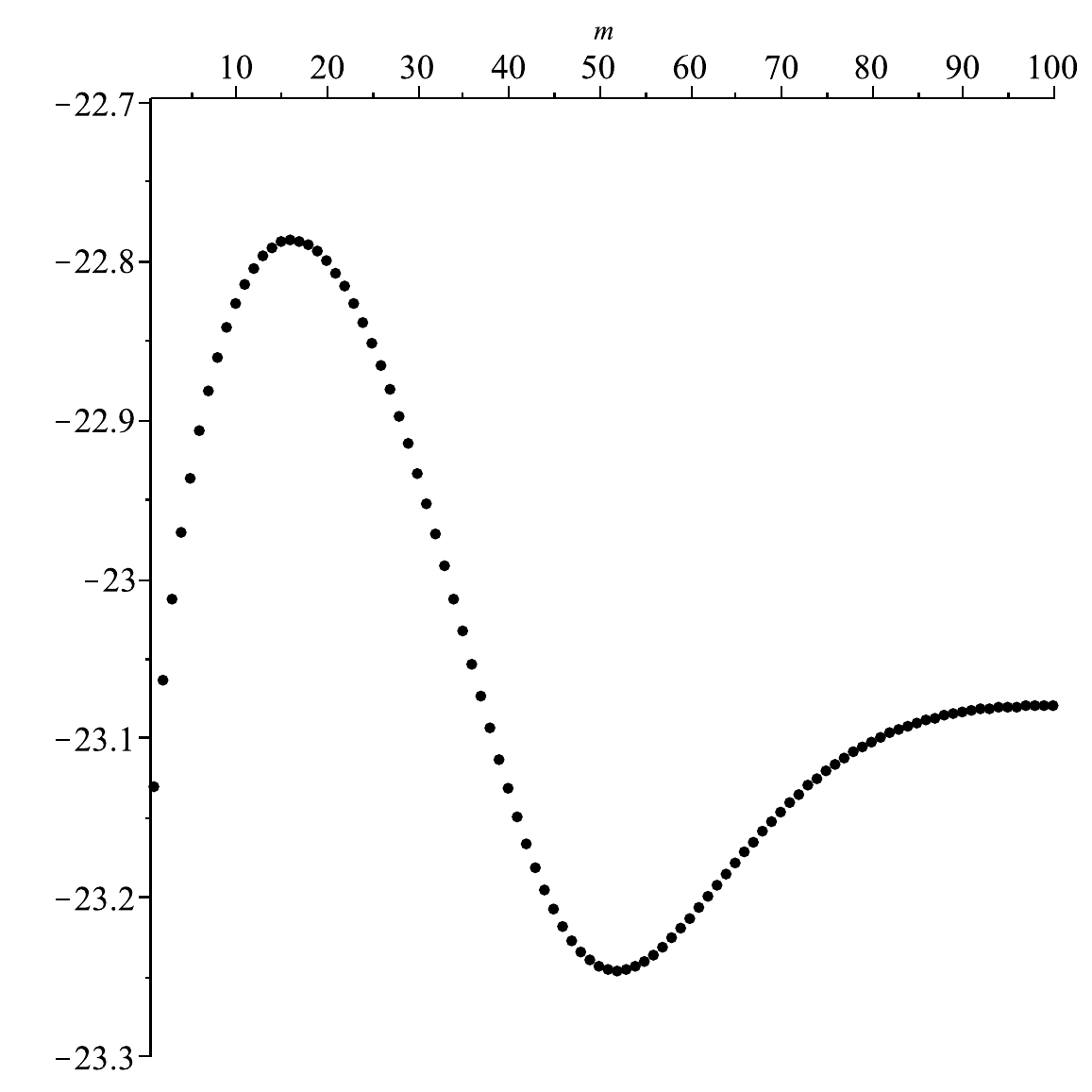}
 \caption{Plots of $\log_{10}|\Delta_{100}(k_{4}(100,m))|$ for $m=1,2,3,\ldots 100$.}
 \label{fig:Airy100zeros}
\end{figure}

We now test the accuracy of our approximations (\ref{eq122}) for the zeros of $K_{i \nu}(t)$. To this end, let
%%%%%%%%%%%%%%%%%%%%%%%%%
\begin{equation}
\label{eq129}
k_{4}(\nu,m) =
\nu \sum_{s=0}^{4}
\frac{\kappa_{m,s}}{\nu^{2s}}.
\end{equation}
%%%%%%%%%%%%%%%%%%%%%%%%%
Similarly to (\ref{eq51}) we estimate the relative error $\delta_{\nu,m}$, as defined by
%%%%%%%%%%%%%%%%%%%%%%%%%
\begin{equation}
\label{eq132}
k_{4}(\nu,m)=\nu k_{\nu,m}\left(1+\delta_{\nu,m}\right),
\end{equation}
%%%%%%%%%%%%%%%%%%%%%%%%%
and do so by computing $\Delta_{\nu}(k_{4}(\nu,m))$ for various values of $\nu$ and $m$; this is given by (cf. (\ref{eq52}))
%%%%%%%%%%%%%%%%%%%%%%%%%
\begin{equation}
\label{eq130}
\Delta_{\nu}(t)
= \frac{K_{i\nu}(t)}{t K'_{i\nu}(t)}
=-\frac{K_{i\nu}(t)}{t\Re\{K_{1+i\nu}(t)\}}.
\end{equation}
%%%%%%%%%%%%%%%%%%%%%%%%%
Here the second equality follows from \cite[Eq. 10.29.2]{NIST:DLMF} to aid in computation. Again by Taylor's theorem we have $\Delta_{\nu}(k_{4}(\nu,m)) = \delta_{\nu,m} + \mathcal{O}( \delta_{\nu,m}^{2})$. For example, for $\nu=10$ and $m=20$ we obtain $k_{4}(10,20)=0.0014850135 \cdots$ and then
%%%%%%%%%%%%%%%%%%%%%%%%%
\begin{equation}
\label{eq135}
\Delta_{10}(k_{4}(10,20)) 
= \mathbf{8.18131294761}070\cdots\times 10^{-14},
\end{equation}
%%%%%%%%%%%%%%%%%%%%%%%%%
with the digits in bold agreeing with the exact relative error. The latter value was found in Maple with Digits set to 30 and numerically solving for small $\epsilon$ the equation
%%%%%%%%%%%%%%%%%%%%%%%%%
\begin{equation}
\label{eq136}
K_{10i}\left(k_{4}(10,20)(1+\epsilon)^{-1}\right) = 0.
\end{equation}
%%%%%%%%%%%%%%%%%%%%%%%%%

Graphs of the values of $\log_{10}|\Delta_{\nu}(k_{4}(\nu,m))|$ for $m=1,2,3,\ldots 100$ are shown in \cref{fig:Airy5zeros,fig:Airy10zeros,fig:Airy100zeros} for $\nu=5$, $\nu=10$ and $\nu=100$, respectively. We observe uniform high accuracy for all values of $m$.

\section*{Acknowledgments}
I thank the anonymous referees for helpful comments. Financial support from Ministerio de Ciencia e Innovación pro\-ject PID2021-127252NB-I00 (MCIN/AEI/10.13039/ 501100011033/FEDER, UE) is acknowledged.

\section*{Disclosure Statement}
The author has no conflict of interest to declare that is relevant to the content of this article.

\makeatletter
\interlinepenalty=10000

\bibliographystyle{siamplain}
\bibliography{biblio}
\end{document}